\newtheorem{definicao}{Definition}%[chapter]
\newtheorem{lema}[definicao]{Lemma}%[section]
\newtheorem{proposicao}[definicao]{Proposition}%[section]
\newtheorem{obs}[definicao]{Remark}%[section]
\newtheorem{teorema}[definicao]{Theorem}%[section]
\newtheorem{corolario}[definicao]{Corollary}%[section]
\newcommand{\rn}[1]{\mathbb{R}^{#1}}
\newcommand{\zz}{\mathbb{Z}_{2}}
\newcommand{\dd}{\mathbb{D}_{4}}
\def\bq{\begin{equation}}
\def\eq{\end{equation}}
\begin{document}

%%%%

\title[Reversible-equivariant systems]{Reversible-equivariant systems and matricial equations}

\author[R.M. Martins and M.A. Teixeira]
{Ricardo Miranda Martins and Marco Antonio Teixeira}

\address{Department of Mathematics, Institute of Mathematics, Statistics and Scientific Computing. P.O. Box 6065, University of Campinas - UNICAMP. Campinas, Brazil.
}

\email[R.M. Martins]{rmiranda@ime.unicamp.br}
\email[M.A. Teixeira]{teixeira@ime.unicamp.br}

%\address{$^3$ Departamento de Matematica, Universidade
%Estadual de Campi\-nas, Caixa Postal 6065, 13083--859, Campinas,
%SP, Brazil\bigskip}

%\email{teixeira@ime.unicamp.br}

\thanks{$^*$ This research was partially supported by CNPq Brazil process No 134619/2006-4 (Martins) and by FAPESP Brazil projects numbers 2007/05215-4 (Martins) and 2007/06896-5 (Teixeira).}

%34C20=normal forms
%37C80=equivariant DS
%15A24=matrix equations and IDs

\subjclass[2000]{34C20, 37C80, 15A24}

\keywords{reversible-equivariant dynamical systems, involutory symmetries, normal forms}
\date{}
\dedicatory{}

\begin{abstract}
This paper uses tools in group theory and symbolic computing to give a classification of the representations of finite groups with order lower than 9 that can be derived from the study of local reversible-equivariant vector fields in $\rn{4}$. The results are obtained by solving algebraically matricial equations. In particular, we exhibit the involutions used in a local study of reversible-equivariant vector fields. Based on such approach we present, for each element in this class, a simplified Belitiskii normal form.

\end{abstract}

\maketitle

%%%

%\tableofcontents

\section{Introduction}

The presence of involutory symmetries and involutory reversing symmetries is very common in physical systems, for example, in classical mechanics, quantum mechanics and thermodynamic (see \cite{lambsurvey}). The theory of ordinary differential equations with symmetry dates back from 1915 with the work of Birkhoff (\cite{birk1915}). Birkhoff realized a special property of its model: the existence of a involutive map $R$ such that the system was symmetric with respect to the set of fixed points of $R$. Since then, the work on differential equations with symmetries stay restricted to hamiltonian equations. Only in 1976, Devaney developed a theory for reversible dynamical systems (\cite{devaney}).

In this paper, involutory symmetries and involutory reversing symmetries are considered within a unified approach. We study some possible linearizations for symmetries and reversing symmetries, around a fixed point, and employ this to simplify normal forms for a class of vector fields.

In particular, using tools from group theory and symbolic computing, we exhibit the involutions used in a local study of reversible-equivariant vector fields. Based on such approach we present, for each element in this class, a simplified Belitiskii normal form.

An important point to mention is that any map possessing an involutory reversing symmetry is the composition of two involutions, as was found by Birkhoff. It is worth to point out that properties of reversing symmetry groups are a powerful tool to study local bifurcation theory in presence of symmetries. Refer to \cite{bi}, where many useful informations on involutions are provided.

\section{Statement of main results}

Let $\mathfrak{X}_0(\rn{4})$ denote the set of all germs of $C^\infty$ vector fields in $\rn{4}$ with a isolated singularity at origin. Define \begin{equation}\label{AA}
A(\alpha,\beta)=\left(\begin{array}{cccc}
0&-\alpha&0&0\\
\alpha&0&0&0\\
0&0&0&-\beta\\
0&0&\beta&0
\end{array}
\right), \ \alpha,\beta\in\mathbb{R}, \ \alpha\beta\neq 0
\end{equation}
 and
\[\mathfrak{X}_0^{(\alpha,\beta)}(\rn{4})=\{X\in \mathfrak{X}_0(\rn{4}); \ DX(0)=A(\alpha,\beta)\}.\]

Given a group $G$ of involutive diffeomorphisms $\rn{4},0\rightarrow\rn{4},0$ and a {group homomorphism} $\rho:G\rightarrow\{- 1,1\}$, we say that $X\in\mathfrak{X}_0(\rn{4})$ is $G$-reversible-equivariant if, for each $\phi\in G$, \[D\phi(x)X(x)=\rho(\phi)X(\phi(x)).\] If $K\subseteq G$ is such that $\rho(K)= 1$ we say that $X$ is $K$-equivariant. If $K\subset G$ is such that $\rho(K)=-1$, we say that $X$ is $K$-reversible. It is clear that if $X$ is $\phi_1$-reversible and $\phi_2$-reversible, then $X$ is also $\phi_1\phi_2$-equivariant. It is usual to denote $G_+=\{\phi\in G; \rho(\phi)=1\}$ and $G_-=\{\phi\in G; \rho(\phi)=-1\}$. Note that $G_+$ is a subgroup of $G$, but $G_-$ is not.

If $X\in\mathfrak{X}_0(\rn{4})$ is $\varphi$-reversible (resp. $\phi$-equivariant) and $\gamma(t)$ is a solution of \bq\label{edo}\dot x=X(x)\eq with $\gamma(0)=x_0$, then $\varphi\gamma(-t)$ (resp. $\phi\gamma(t)$) is also a solution for \eqref{edo}. In particular, if $X$ is a $\phi$-reversible (or $\phi$-equivariant) vector field, the phase portrait of $X$ is symmetric with respect to the subspace $Fix(\phi)$.

A compendium containing work for reversible-equivariant vector fields is described in \cite{patty}, \cite{lambsurvey}, \cite{devaney} and references therein.

In this paper, we shall restrict our study to the case $G$ finite and generated by two involutions, $G=\langle \varphi,\psi\rangle$, in such a way that when $X$ is a $G$-reversible-equivariant vector field, then $X$ is reversible with respect to both $\varphi$ and $\psi$. In this case, by a basic group theory argument, one can prove that there is $n\geq 2$ such that $G\cong \mathbb D_n$.

Our aim is to provide an analogous form of the theorem below to the $G$-reversible-equivariant case:
\begin{teorema}\label{thm1}Let $X\in \mathfrak{X}_0(\rn{2n})$ be a $\varphi$-reversible vector field, where $\varphi:\rn{2n},0\rightarrow\rn{2n},0$ is a $C^\infty$ involution with $\dim Fix(\varphi)=n$ as a local submanifold, and let $R_0:\rn{2n}\rightarrow\rn{2n}$ be any linear involution with $\dim Fix(R_0)=n$. Then there exists a change of coordinates $h:\rn{2n},0\rightarrow\rn{2n},0$ (depending on $R_0$) such that $h_*X$ is $R_0$-reversible.
\end{teorema}

The proof of Theorem \ref{thm1} is straightforward: $\varphi$ is locally conjugated to $D\varphi(0)$ by the change of coordinates $Id+d\varphi(0)\varphi$; now, $D\varphi(0)$ and $R_0$ are linearly conjugated (by $P$, say), as they are linear involutions with $\dim Fix(D\varphi(0))=\dim Fix(R_0)$. Now take $h=P\circ(Id+d\varphi(0)\varphi)$.

Theorem \ref{thm1} is very useful when when one works locally with reversible vector fields. See for example in \cite{tei1} and \cite{tei2}, as it allowed to always fix the involution as the following: \bq\label{r0} R_0(x_1,\ldots,x_{2n})=(x_1,-x_2,\ldots,x_{2n-1},-x_{2n}).\eq

%In the reversible-equivariant context, however, it is not possible to fix the involution.

\begin{definicao}Given a finitely generated group $G=\langle g_1,\ldots,g_l\rangle$, a representation $\rho:G\rightarrow M_{n\times n}(\mathbb{R})$ and a vector field $X\in\mathfrak{X}_0(\rn{n})$, we say that the representation $\rho$ is $(X,G)$-compatible if $\rho(g_j)X(x)=-X(\rho(g_j))$, for all $j=1,\ldots,l$.
\end{definicao}

We prove the following:\\

\noindent{\bf Theorem A:} Given $X\in\mathfrak{X}_0^{(\alpha,\beta)}(\rn{4})$, we present all the $X_{\mathbb D_{n}}$-compatible $4$-dimensional representations of $X$, for $n=2,3,4$.\\

As an application of Theorem A, we obtain the following result.\\

\noindent{\bf Theorem B:} The Belitskii normal form for $\mathbb D_{n}$-reversible-equivariant vector fields ($n=2,3,4$) in $\mathfrak{X}_0^{(\alpha,\beta)}(\rn{4})$ is exhibited.\\

For further details on normal form theory, see \cite{Be} and \cite{bruno}.

This paper is organized as follows. In Section 3 we set the problem and we reduce it to a system of matricial equations. In Section 4 we prove Theorem A and in Section 5, we prove Theorem B.

\section{Setting the problem}\label{secao2}

Consider $X\in\mathfrak{X}_0^{(\alpha,\beta)}(\rn{4})$ for $\alpha\beta\neq 0$. Denote $A=DX(0)$. Let $\varphi,\psi:\rn{4},0\rightarrow\rn{4}$ be involutions with $\dim Fix(\varphi)=\dim Fix(\psi)=2$ and suppose that $X$ is $\langle \varphi,\psi\rangle$-reversible-equivariant.

Next result will be useful in the sequel.

\begin{teorema}[Montgomery-Bochner, \cite{MB1},\cite{MB2},\cite{MB3}]\label{cmb}Let $G$ be a compact group of $C^k$ diffeomorphisms defined on a $C^{k\geq 1}$ manifold $\mathcal{M}$. Suppose that all diffeomorphisms in $G$ have a common fixed point, say $x_0$. Then, there exists a $C^k$ coordinate system $h$ around $x_0$ such that all diffeomorphisms in $G$ are linear with respect to $h$.
\end{teorema}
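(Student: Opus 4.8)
The plan is to use the classical averaging trick of Bochner, exploiting the compactness of $G$ to build a linearizing coordinate system by integrating against Haar measure. First I would reduce to a purely local computation: choose a $C^k$ chart sending $x_0$ to the origin, so that without loss of generality $\mathcal{M}=\rn{n}$ and $x_0=0$, and each $g\in G$ becomes a $C^k$ germ of diffeomorphism fixing $0$. Differentiating at the origin yields $\sigma(g)=Dg(0)\in GL(n,\mathbb{R})$, and since $g(0)=0$ for all $g$, the chain rule gives $D(g_1\circ g_2)(0)=Dg_1(0)\,Dg_2(0)$, so $\sigma$ is a linear representation of $G$. The goal is then to produce a local diffeomorphism $h$ with $h\circ g=\sigma(g)\circ h$ for every $g\in G$; in the coordinates defined by $h$, each $g$ acts as the linear map $\sigma(g)$.

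The heart of the argument is the averaged map. Let $\mu$ be the normalized (bi-invariant) Haar measure on the compact group $G$ and set
\[ h(x)=\int_G \sigma(g)^{-1}\,g(x)\,d\mu(g). \]
Since $g(0)=0$ for every $g$, we have $h(0)=0$, and differentiating under the integral sign gives $Dh(0)=\int_G \sigma(g)^{-1}\sigma(g)\,d\mu(g)=\int_G \mathrm{Id}\,d\mu(g)=\mathrm{Id}$, which is invertible; by the inverse function theorem $h$ is a $C^k$ diffeomorphism on a neighborhood of $0$.

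To verify that $h$ linearizes the action, fix $g_0\in G$ and compute, writing $g\circ g_0=gg_0$ for the group product,
\[ h(g_0(x))=\int_G \sigma(g)^{-1}\,(gg_0)(x)\,d\mu(g). \]
Substituting $u=gg_0$ and using invariance of Haar measure ($d\mu(g)=d\mu(u)$), together with the homomorphism identity $\sigma(ug_0^{-1})^{-1}=\sigma(g_0)\sigma(u)^{-1}$, this becomes
\[ \sigma(g_0)\int_G \sigma(u)^{-1}u(x)\,d\mu(u)=\sigma(g_0)\,h(x). \]
Hence $h\circ g_0=\sigma(g_0)\circ h$ for all $g_0\in G$, which is precisely the assertion.

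The main obstacle is analytic rather than algebraic: one must guarantee that the integral defines a genuinely $C^k$ map and that differentiation under the integral sign is legitimate. This requires joint continuity (and suitable measurability) of $(g,x)\mapsto g(x)$ together with standard dominated-convergence estimates over the compact parameter space $G$; for a compact Lie group these are routine, and $h$ inherits the $C^k$ regularity of the action. I remark that in every application in this paper $G$ is finite (isomorphic to some $\mathbb{D}_n$), so the Haar integral collapses to the finite average $h(x)=|G|^{-1}\sum_{g\in G}\sigma(g)^{-1}g(x)$; all analytic subtleties then vanish, $h$ is manifestly $C^k$, and the conjugation identity follows verbatim by re-indexing the finite sum.
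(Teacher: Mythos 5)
Your proposal is correct: it is the classical Bochner averaging argument, and in fact it is essentially the proof contained in the sources the paper cites (\cite{MB1}, \cite{MB2}). Note that the paper itself offers \emph{no} proof of this statement --- Theorem \ref{cmb} is quoted as a known result from the literature --- so there is nothing internal to compare against; your reconstruction of $h(x)=\int_G \sigma(g)^{-1}g(x)\,d\mu(g)$ with $\sigma(g)=Dg(0)$, the computation $Dh(0)=\mathrm{Id}$, and the equivariance identity $h\circ g_0=\sigma(g_0)\circ h$ via invariance of Haar measure is exactly the standard route. The points you flag as ``analytic'' (existence of Haar measure on $G$, joint continuity and $C^k$ dependence of $(g,x)\mapsto g(x)$ permitting differentiation under the integral, and the choice of a neighborhood $U$ of $x_0$ small enough that $g(U)$ stays in the chart for all $g$) are precisely the content supplied by the cited Bochner--Montgomery papers, where it is shown that a compact group of differentiable transformations is a Lie group acting smoothly; and, as you correctly observe, every group actually used in this paper is finite (a dihedral group generated by two involutions), so the integral degenerates to a finite average and all such subtleties disappear.
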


Putting $G=\langle \varphi,\psi\rangle$, as $\varphi(0)=0$ and $\psi(0)=0$, Theorem \ref{cmb} says that there exists a coordinate system $h$ around $0$ such that $\widetilde \varphi=h^{-1}\varphi h$ and $\widetilde \psi=h^{-1}\psi h$ are linear involutions. Now consider $\widetilde X$ as $X$ is this new system of coordinates. Then $\widetilde X$ is $\langle\widetilde \varphi,\widetilde \psi\rangle$-reversible-equivariant.

Now choose any involution $R_0:\rn{4}\rightarrow\rn{4}$ with $\dim Fix(R_0)=2$. As $R_0$ and $\widetilde\varphi$ are linearly conjugated, we can pass to a new system of coordinates such that $\widetilde X$ is $\langle R_0,{\widetilde{\psi}_0}\rangle$-reversible-equivariant for some ${\widetilde{\psi}_0}$. However, it is not possible to choose a priori a good (linear) representative for the second involution, ${\widetilde{\psi}_0}$.

In other words, it is not possible to produce a analog version of Theorem \ref{thm1} for reversible-equivariant vector fields. We shall take into account all the possible choices for the second involution.\\

\noindent {\bf Problem A:} Let $G=\langle \varphi,\psi\rangle$ be a group generated by involutive diffeomorphisms, and $X\in\mathfrak{X}_0(\rn{2n})$ be a $G$-reversible-equivariant vector field. Find all of the  $(X,G)$-compatible representations $\rho$ with $\rho(\varphi)=R_0$, $R_0$ given by \eqref{r0}.\\

To solve Problem A, we have to determine all the linear involutions $S$ such that $\langle R_0,S\rangle\cong G$ and $SDX(0)+DX(0)S=0$ (this last relation is the compatibility condition for the linear part of $X$).

\section{Proof of Theorem A}

In this section we prove Theorem A. Recall that the list of groups to be considered is: (a) $\mathbb{Z}_2\times\mathbb{Z}_2$, (b) $\mathbb{D}_3$ and (c) $\mathbb{D}_4$. 

\subsection{Case $\zz\times\zz$}\label{secao31}

Fix the matrix
\begin{equation}
R_0=\left(\begin{array}{cccc}
1&0&0&0\\
0&-1&0&0\\
0&0&1&0\\
0&0&0&-1
\end{array}
\right).
\end{equation}

Note that $R_0^2=Id$ and $R_0A=-AR_0$. We need to determine all possible involutive matrices $S\in\rn{4\times 4}$ such that \[SA=-AS\] and \[\langle R_0,S\rangle\cong \mathbb{Z}_2\times\mathbb{Z}_2.\]

Note that the relation $\langle R_0,S\rangle\cong \mathbb{Z}_2\times\mathbb{Z}_2$ is equivalent to $R_0S=SR_0$ and $S^2=Id$.

Put 
\begin{equation}
S=\left(\begin{array}{cccc}
a_1&b_1&c_1&d_1\\
a_2&b_2&c_2&d_2\\
a_3&b_3&c_3&d_3\\
a_4&b_4&c_4&d_4
\end{array}
\right).
\end{equation}

The relations $SA=-AS$, $S^2=Id$ and $R_0S=SR_0$ are represented by the following systems of polynomial equations:
\begin{equation}\label{monstro}
\left\{
\begin{array}{lcl}
a_1^2-1+c_1a_3&=&0\\
a_1c_1+c_1c_3&=&0\\
b_2d_2+d_2d_4&=&0\\
a_3a_1+c_3a_3&=&0\\
b_4b_2+d_4b_4&=&0\\
b_2^2+d_2b_4-1&=&0\\
c_1a_3+c_3^2-1&=&0\\
d_2b_4+d_4^2-1&=&0
\end{array}
\right. \ \ \ \ \ \ \ \ \ \ \ \ \ \
\left\{
\begin{array}{lcl}
-a_1\alpha-b_2\alpha&=&0\\
-c_1\beta-\alpha d_2&=&0\\
b_2\alpha+a_1\alpha&=&0\\
d_2\beta+\alpha c_1&=&0\\
-a_3\alpha-\beta b_4&=&0\\
-c_3\beta-d_4\beta&=&0\\
b_4\alpha+\beta a_3&=&0\\
d_4\beta+c_3\beta&=&0
\end{array}
\right.
\end{equation}

\begin{lema}System \eqref{monstro} has 4 solutions:
\[
S_1=\left(
\begin{array}{cccc}
-1& 0& 0& 0\\
0& 1& 0& 0\\
0& 0& -1& 0\\
0& 0& 0& 1
\end{array}
\right), \ \ S_2=\left(
\begin{array}{cccc}
-1& 0& 0& 0\\
0& 1& 0& 0\\
0& 0& 1& 0\\
0& 0& 0& -1
\end{array}
\right)
\]
\[
S_3=\left(
\begin{array}{cccc}
1& 0& 0& 0\\
0& -1& 0& 0\\
0& 0& -1& 0\\
0& 0& 0& 1
\end{array}
\right), \ \ S_4=\left(
\begin{array}{cccc}
1& 0& 0& 0\\
0& -1& 0& 0\\
0& 0& 1& 0\\
0& 0& 0& -1
\end{array}
\right)
\]
\end{lema}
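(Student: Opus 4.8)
The plan is to peel off the three conditions in \eqref{monstro} one at a time, in increasing order of difficulty, rather than attacking the full polynomial system at once. First I would exploit the commutation relation $R_0S=SR_0$. Since $R_0=\mathrm{diag}(1,-1,1,-1)$, a matrix commutes with $R_0$ precisely when it preserves the eigenspace splitting $\rn{4}=\ker(R_0-Id)\oplus\ker(R_0+Id)$, spanned respectively by $\{e_1,e_3\}$ and $\{e_2,e_4\}$. Hence every admissible $S$ must be block-diagonal for this splitting, which forces the eight off-block entries $b_1,d_1,a_2,c_2,b_3,d_3,a_4,c_4$ to vanish. This is exactly the reason that only the unknowns $a_1,c_1,a_3,c_3$ (forming the block $P=\left(\begin{smallmatrix}a_1&c_1\\ a_3&c_3\end{smallmatrix}\right)$ on $\mathrm{span}\{e_1,e_3\}$) and $b_2,d_2,b_4,d_4$ (forming $Q=\left(\begin{smallmatrix}b_2&d_2\\ b_4&d_4\end{smallmatrix}\right)$ on $\mathrm{span}\{e_2,e_4\}$) appear in \eqref{monstro}.

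Next I would use the linear relations on the right-hand side of \eqref{monstro}, which encode $SA+AS=0$. Cancelling the nonzero factors $\alpha,\beta$, these collapse to $b_2=-a_1$ and $d_4=-c_3$, together with two homogeneous $2\times2$ systems
\[
\begin{pmatrix}\beta&\alpha\\ \alpha&\beta\end{pmatrix}\begin{pmatrix}c_1\\ d_2\end{pmatrix}=0,
\qquad
\begin{pmatrix}\beta&\alpha\\ \alpha&\beta\end{pmatrix}\begin{pmatrix}b_4\\ a_3\end{pmatrix}=0.
\]
The coefficient matrix has determinant $\beta^2-\alpha^2$, which is nonzero for generic $\alpha,\beta$ (that is, $\alpha\neq\pm\beta$), so that $c_1=d_2=a_3=b_4=0$ and both blocks $P,Q$ become diagonal.

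Finally I would impose $S^2=Id$. On the now-diagonal blocks the left-hand equations of \eqref{monstro} reduce, after the cross terms have been killed, to the single constraints $a_1^2=1$ and $c_3^2=1$, while $b_2=-a_1$ and $d_4=-c_3$ are already determined; one checks directly that the remaining equations $b_2^2-1=0$ and $d_4^2-1=0$ are then automatically satisfied. Enumerating the four sign choices $(a_1,c_3)\in\{\pm1\}^2$ yields precisely the four diagonal matrices $S_1,\dots,S_4$ of the statement (with $S_4=R_0$ the trivial solution), and nothing else.

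The step demanding the most care is the inversion of $\left(\begin{smallmatrix}\beta&\alpha\\ \alpha&\beta\end{smallmatrix}\right)$, whose determinant $\beta^2-\alpha^2$ degenerates exactly at the resonance $\alpha=\pm\beta$. In that case the off-diagonal entries $c_1,d_2,a_3,b_4$ need no longer vanish, the block $Q$ becomes $\pm P$, and $S^2=Id$ reduces to the single condition $P^2=Id$, whose solution set is a positive-dimensional variety rather than four isolated points. I therefore expect the real subtlety of the lemma to be not the computation but the standing assumption: the count of exactly four solutions is a statement about \emph{generic} $\alpha,\beta$, and the nonresonance $\alpha^2\neq\beta^2$ is precisely what a symbolic solver tacitly invokes when it treats $\alpha,\beta$ as indeterminates.
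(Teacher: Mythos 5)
Your proof is correct and arrives at exactly the four matrices of the statement, but it takes a genuinely different route from the paper's: the paper's entire proof is an appeal to symbolic computation (the system \eqref{monstro} is handed to Maple 12 via the Groebner package's \texttt{Reduce} and the \texttt{solve} command), whereas you give a structural hand argument --- commutation with $R_0$ forces $S$ to respect the splitting $\mathrm{span}\{e_1,e_3\}\oplus\mathrm{span}\{e_2,e_4\}$, anticommutation with $A(\alpha,\beta)$ gives $b_2=-a_1$, $d_4=-c_3$ plus two homogeneous $2\times 2$ systems with determinant $\beta^2-\alpha^2$, and $S^2=Id$ then leaves exactly the four sign choices. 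What your argument buys is transparency and, more importantly, it makes visible a hypothesis that the paper nowhere states: the count of exactly four solutions requires $\alpha^2\neq\beta^2$. The paper's standing assumption is only $\alpha\beta\neq 0$, and the $1{:}1$ resonance $\alpha=\pm\beta$ is genuinely inside its scope (it appears as case (ii), $\lambda=1$, in Section \ref{ultimasecao}); at that resonance your analysis shows the solution set of \eqref{monstro} is a positive-dimensional variety (for $\alpha=\beta$ one gets $Q=-P$ with $P^2=Id$, a two-parameter family), so the lemma as literally stated fails there. The Maple computation, treating $\alpha,\beta$ as indeterminates, tacitly certifies only the generic case --- precisely the caveat you flag. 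Conversely, what the paper's black-box method buys is uniformity: the identical computation disposes of the $\mathbb{D}_3$ and $\dd$ cases (Lemma \ref{solmonstro} and Section \ref{secaod4}), where the relation $(R_0S)^n=Id$ with $n=3,4$ replaces commutativity, your eigenspace-splitting trick no longer applies directly, and a hand enumeration would be substantially more laborious.
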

\begin{proof}This can be done in Maple 12 by means of the \texttt{Reduce} function from the Groebner package and the usual Maple's \texttt{solve} function. We remark that the solution $S_4$ is degenerate, i.e., $S_4=R_0$. Moreover, we remark that the above representations of $\zz\times\zz$ are not equivalent.
\end{proof}

Now we state the main result for $\mathbb{Z}_2\times\mathbb{Z}_2$-reversible vector fields. With the notation of Section \eqref{secao2}, it assures that the linear involutions $S_j$ are the unique possibilities for $\widetilde{\psi_0}$.

\begin{teorema}\label{z2z2}Let $\Omega_{\zz\times\zz}\subset\mathfrak{X}_0^{(\alpha,\beta)}(\rn{4})$ be the set of $\zz\times\zz$-reversible-equivariant vector fields $X\in\mathfrak{X}_0^{(\alpha,\beta)}(\rn{4})$. Then $\Omega=\Omega_1\cup\Omega_2\cup\Omega_3$, where $X\in\Omega_j$ if $X$ is $(R_0,S_j)$-reversible-equivariant in some coordinate system around the origin.
\end{teorema}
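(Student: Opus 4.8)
The plan is to combine the reduction machinery of Section \ref{secao2} with the algebraic classification just established in the Lemma. First I would recall the setup: starting from an arbitrary $\zz\times\zz$-reversible-equivariant vector field $X\in\mathfrak{X}_0^{(\alpha,\beta)}(\rn{4})$, Theorem \ref{cmb} (Montgomery-Bochner) lets me pass to a coordinate system in which both generating involutions $\varphi,\psi$ become linear, and then Theorem \ref{thm1} applied to the first involution lets me further conjugate so that $\widetilde\varphi=R_0$, with $R_0$ as fixed in this subsection. After these two changes of coordinates, $\widetilde X$ is $\langle R_0,\widetilde{\psi_0}\rangle$-reversible-equivariant, where $\widetilde{\psi_0}$ is a (still unknown) linear involution.

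The key step is to identify the constraints on $\widetilde{\psi_0}$. Since $\widetilde X$ is $\widetilde{\psi_0}$-reversible, the compatibility condition on the linear part forces $\widetilde{\psi_0}A=-A\widetilde{\psi_0}$ with $A=DX(0)=A(\alpha,\beta)$; since $\widetilde{\psi_0}$ is an involution generating a copy of $\zz\times\zz$ together with $R_0$, I also have $\widetilde{\psi_0}^2=Id$ and $R_0\widetilde{\psi_0}=\widetilde{\psi_0}R_0$. These are precisely the relations encoded in system \eqref{monstro}, so the preceding Lemma tells me that $\widetilde{\psi_0}\in\{S_1,S_2,S_3,S_4\}$. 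Discarding the degenerate solution $S_4=R_0$ (which would not generate $\zz\times\zz$ but rather $\zz$), I conclude $\widetilde{\psi_0}\in\{S_1,S_2,S_3\}$. Hence $\widetilde X$ is $(R_0,S_j)$-reversible-equivariant for some $j\in\{1,2,3\}$, which places $X$ in the corresponding $\Omega_j$ and proves $\Omega\subseteq\Omega_1\cup\Omega_2\cup\Omega_3$. The reverse inclusion is immediate, since each $S_j$ is a genuine involution with $\langle R_0,S_j\rangle\cong\zz\times\zz$ and $S_jA=-AS_j$, so any $(R_0,S_j)$-reversible-equivariant field indeed lies in $\Omega$.

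The step I expect to require the most care is verifying that the two successive coordinate changes can be performed without destroying the normalizations already achieved, and that the resulting linear involution $\widetilde{\psi_0}$ genuinely satisfies all three algebraic relations rather than merely the commutation relation. In particular, one must check that fixing $\widetilde\varphi=R_0$ via Theorem \ref{thm1} is compatible with keeping $\widetilde{\psi_0}$ linear (this is where the joint linearization from Montgomery-Bochner, applied to the whole group $G$ at once, is essential, rather than applying Theorem \ref{thm1} to each involution independently). I would also remark that the three cases are genuinely distinct: since the representations associated with $S_1,S_2,S_3$ are pairwise inequivalent (as noted in the Lemma), the decomposition $\Omega=\Omega_1\cup\Omega_2\cup\Omega_3$ is not reducible to a single normal form, which is exactly the obstruction to an analog of Theorem \ref{thm1} anticipated in the discussion of Problem A.
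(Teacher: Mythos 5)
Your proof is correct and follows exactly the route the paper intends: the paper itself omits the proof as ``straightforward,'' but its Section 3 reduction (Montgomery--Bochner joint linearization of $\langle\varphi,\psi\rangle$, then conjugation of the first involution to $R_0$) combined with the Lemma classifying the solutions of system \eqref{monstro} is precisely your argument, as the paper signals in the sentence preceding the theorem (``it assures that the linear involutions $S_j$ are the unique possibilities for $\widetilde{\psi_0}$''). Your closing remarks --- that the linearization must be applied to the whole group at once so the second involution stays linear, and that the degenerate solution $S_4=R_0$ must be discarded --- supply exactly the details the paper leaves implicit.
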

\begin{proof}The proof is straightforward and it will be omitted.
\end{proof}

Now let us give a characterization of the vector fields which are $(R_0,S_j)$-reversible. Let us fix \begin{equation}\label{campox}X(x)=A(\alpha,\beta)x+(f_1(x),f_2(x),f_3(x),f_4(x))^T,\end{equation} \noindent with $x\equiv(x_1,x_2,y_1,y_2)$. The proof of next results will be omitted.

\begin{corolario}\label{1stcor}The vector field \eqref{campox} is $(R_0,S_1)$-reversible if and only if the functions $f_j$ satisfies
\begin{equation*}\left\{
\begin{array}{lcllcl}
f_1(x)&=&-f_1(x_1,-x_2,y_1,-y_2)&=&f_1(-x_1,x_2,-y_1,y_2)\\
f_2(x)&=&f_2(x_1,-x_2,y_1,-y_2)&=&-f_2(-x_1,x_2,-y_1,y_2)\\
f_3(x)&=&-f_3(x_1,-x_2,y_1,-y_2)&=&f_3(-x_1,x_2,-y_1,y_2)\\
f_4(x)&=&f_4(x_1,-x_2,y_1,-y_2)&=&-f_4(-x_1,x_2,-y_1,y_2).
\end{array}
\right.
\end{equation*}

\noindent In particular, $f_{1,3}(x_1,0,y_1,0)\equiv 0$ and $f_{2,4}(0,x_2,0,y_2)\equiv 0$.

\end{corolario}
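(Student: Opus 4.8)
The plan is to unwind the definition of $(R_0,S_1)$-reversibility into two algebraic functional equations and then read off the constraints on each component $f_j$. Since $R_0$ and $S_1$ are \emph{linear} involutions, $DR_0(x)=R_0$ and $DS_1(x)=S_1$ for every $x$, so the two reversing conditions $DR_0(x)X(x)=-X(R_0x)$ and $DS_1(x)X(x)=-X(S_1x)$ collapse to the purely matricial identities $R_0X(x)=-X(R_0x)$ and $S_1X(x)=-X(S_1x)$. Establishing that $(R_0,S_1)$-reversibility is equivalent to this pair of identities is the first step.

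Next I would substitute $X(x)=A(\alpha,\beta)x+(f_1,f_2,f_3,f_4)^T$ and separate the linear from the nonlinear part in each identity. The linear terms produce $R_0Ax=-AR_0x$ and $S_1Ax=-AS_1x$, which hold identically: the first because $R_0A=-AR_0$ (noted just above the Lemma), and the second because $S_1A=-AS_1$ is exactly the relation $SA=-AS$ that $S_1$ was selected to satisfy. Hence the linear part imposes no condition, and the entire content of the corollary reduces to $R_0f(x)=-f(R_0x)$ together with $S_1f(x)=-f(S_1x)$.

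I would then write these two vector identities componentwise using the explicit diagonal actions $R_0x=(x_1,-x_2,y_1,-y_2)$ with $R_0f=(f_1,-f_2,f_3,-f_4)$, and $S_1x=(-x_1,x_2,-y_1,y_2)$ with $S_1f=(-f_1,f_2,-f_3,f_4)$. Comparing the sign on the diagonal entry of $R_0f$ against the global minus sign on $-f(R_0x)$ yields, component by component, the first equality in each line of the displayed system (an $R_0$-parity condition), and the same comparison with $S_1$ yields the second equality (an $S_1$-parity condition). The only bookkeeping to watch is that components $1,3$ and $2,4$ acquire opposite signs from the diagonal involutions, so that $f_{1,3}$ are $R_0$-odd and $S_1$-even while $f_{2,4}$ are $R_0$-even and $S_1$-odd; conversely, any $f$ obeying the displayed system satisfies both matricial identities, giving the ``if'' direction for free.

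Finally, the ``in particular'' statement follows by restricting to the fixed subspaces. On $\Fix(R_0)=\{x_2=y_2=0\}$ the argument $(x_1,-x_2,y_1,-y_2)$ becomes $(x_1,0,y_1,0)$, so the $R_0$-relations force each of $f_{1,3}(x_1,0,y_1,0)$ to equal its own negative, hence to vanish; symmetrically, on $\Fix(S_1)=\{x_1=y_1=0\}$ the $S_1$-relations give $f_{2,4}(0,x_2,0,y_2)\equiv 0$. I do not anticipate a genuine obstacle: the computation is routine, and the one point requiring care is verifying once and for all that the linear part is automatically compatible, so that every constraint lands on $f$ — and this is guaranteed by the anticommutation relations already recorded for $R_0$ and $S_1$.
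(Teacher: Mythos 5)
Your proposal is correct: reducing the two reversibility conditions to the matricial identities $R_0f(x)=-f(R_0x)$ and $S_1f(x)=-f(S_1x)$ (after noting the linear part is absorbed by $R_0A=-AR_0$ and $S_1A=-AS_1$) and then reading off the parity conditions componentwise is exactly the routine verification at stake, and your sign bookkeeping, the converse direction, and the restriction to $\Fix(R_0)=\{x_2=y_2=0\}$ and $\Fix(S_1)=\{x_1=y_1=0\}$ are all accurate. The paper explicitly omits the proof of this corollary as straightforward, and your argument is precisely the computation the authors had in mind, so there is nothing to contrast.
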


\begin{corolario}\label{2ndcor}The vector field \eqref{campox} is $(R_0,S_2)$-reversible if and only if the functions $f_j$ satisfy
\begin{equation}\left\{
\begin{array}{lcllcl}
f_1(x)&=&-f_1(x_1,-x_2,y_1,-y_2)&=&f_1(-x_1,x_2,y_1,-y_2)\\
f_2(x)&=&f_2(x_1,-x_2,y_1,-y_2)&=&-f_2(-x_1,x_2,y_1,-y_2)\\
f_3(x)&=&-f_3(x_1,-x_2,y_1,-y_2)&=&-f_3(-x_1,x_2,y_1,-y_2)\\
f_4(x)&=&f_4(x_1,-x_2,y_1,-y_2)&=&f_4(-x_1,x_2,y_1,-y_2).
\end{array}
\right.
\end{equation}

\noindent In particular, $f_{1,3}(x_1,0,y_1,0)\equiv 0$ and $f_{2,3}(0,x_2,y_1,0)\equiv 0$.

\end{corolario}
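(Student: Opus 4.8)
The plan is to recognize that the corollary is a purely componentwise translation of the two reversibility identities, so the entire proof is sign bookkeeping with no analytic content. First I would note that $(R_0,S_2)$-reversibility means, by definition, that $X$ is simultaneously $R_0$-reversible and $S_2$-reversible, and that each of these is a \emph{reversing} symmetry, so $\rho=-1$. Since $R_0$ and $S_2$ are linear, $D\varphi(x)=\varphi$ for $\varphi\in\{R_0,S_2\}$, and the defining relation $D\varphi(x)X(x)=\rho(\varphi)X(\varphi(x))$ collapses to
\[
R_0\,X(x)=-X(R_0x),\qquad S_2\,X(x)=-X(S_2x).
\]

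Next I would split $X(x)=A(\alpha,\beta)x+f(x)$ with $f=(f_1,f_2,f_3,f_4)^T$ and separate the linear and nonlinear parts. Using the relation $R_0A=-AR_0$ recorded just before the Lemma, and the relation $S_2A=-AS_2$ guaranteed by the Lemma (since $S_2$ solves $SA=-AS$), one gets $R_0(Ax)=-A(R_0x)$ and $S_2(Ax)=-A(S_2x)$, so the linear contributions satisfy the reversibility identities identically and cancel. Each of the two conditions therefore reduces to the nonlinear identity $\varphi\,f(x)=-f(\varphi x)$.

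I would then expand these identities in coordinates. With $R_0x=(x_1,-x_2,y_1,-y_2)$ and the diagonal signs $(1,-1,1,-1)$ of $R_0$, the identity $R_0f(x)=-f(R_0x)$ forces $f_1,f_3$ to be odd and $f_2,f_4$ to be even under $(x_2,y_2)\mapsto(-x_2,-y_2)$, which is precisely the first column of equalities in the statement. Likewise, with $S_2x=(-x_1,x_2,y_1,-y_2)$ and the diagonal signs $(-1,1,1,-1)$ of $S_2$, the identity $S_2f(x)=-f(S_2x)$ yields the second column. Because every implication here is reversible, this simultaneously establishes both directions of the ``if and only if''.

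Finally, for the ``In particular'' assertion I would specialize the sign relations so that a function equals its own negative. Setting $x_2=y_2=0$ in the $R_0$-relations for $f_1$ and $f_3$ gives $f_{1,3}(x_1,0,y_1,0)=-f_{1,3}(x_1,0,y_1,0)$, hence $f_{1,3}(x_1,0,y_1,0)\equiv0$; setting $x_1=y_2=0$ in the $S_2$-relations for $f_2$ and $f_3$ gives $f_{2,3}(0,x_2,y_1,0)=-f_{2,3}(0,x_2,y_1,0)$, hence $f_{2,3}(0,x_2,y_1,0)\equiv0$. The only mild obstacle is keeping the four sign conventions consistent between the two involutions and not confusing the coordinate permutations they induce; beyond that, the argument is entirely algebraic and requires no estimates or smoothness arguments.
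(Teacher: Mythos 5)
Your proof is correct: the linear part drops out because $R_0A=-AR_0$ and $S_2A=-AS_2$, each reversibility condition reduces to the componentwise identity $\varphi f(x)=-f(\varphi x)$, and your sign bookkeeping (including the specializations $x_2=y_2=0$ and $x_1=y_2=0$ for the ``in particular'' claims) all checks out. The paper omits the proof of this corollary as routine, and your argument is exactly the intended one.
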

\begin{corolario}\label{3rdcor}The vector field \eqref{campox} is $(R_0,S_3)$-reversible if and only if the functions $f_j$ satisfy
\begin{equation}\left\{
\begin{array}{lcllcl}
f_1(x)&=&-f_1(x_1,-x_2,y_1,-y_2)&=&-f_1(x_1,-x_2,-y_1,y_2)\\
f_2(x)&=&f_2(x_1,-x_2,y_1,-y_2)&=&f_2(x_1,-x_2,-y_1,y_2)\\
f_3(x)&=&-f_3(x_1,-x_2,y_1,-y_2)&=&f_3(x_1,-x_2,-y_1,y_2)\\
f_4(x)&=&f_4(x_1,-x_2,y_1,-y_2)&=&-f_4(x_1,-x_2,-y_1,y_2).
\end{array}
\right.
\end{equation}

\noindent In particular, $f_{1,3}(x_1,0,y_1,0)\equiv 0$ and $f_{1,4}(x_1,0,0,y_2)\equiv 0$.

\end{corolario}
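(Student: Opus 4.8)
The plan is to unpack the definition of $(R_0,S_3)$-reversibility into two scalar systems and to check that each reduces exactly to one column of equalities in the statement. By the reversibility convention (the case $\rho(\phi)=-1$), saying that $X$ is $(R_0,S_3)$-reversible means that $X$ is simultaneously $R_0$-reversible and $S_3$-reversible, i.e. $R_0X(x)=-X(R_0x)$ and $S_3X(x)=-X(S_3x)$, since $R_0$ and $S_3$ are linear and hence coincide with their own differentials.

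First I would separate the linear and nonlinear parts. Writing $X=A(\alpha,\beta)\,x+F(x)$ with $F=(f_1,f_2,f_3,f_4)^T$, the linear contributions cancel automatically: the relation $R_0A=-AR_0$ is recorded at the start of the $\zz\times\zz$ case, and $S_3A=-AS_3$ holds because $S_3$ is a solution of system \eqref{monstro}, which encodes the relation $SA=-AS$. Consequently both reversibility conditions collapse onto the nonlinear identities $R_0F(x)=-F(R_0x)$ and $S_3F(x)=-F(S_3x)$.

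Second I would read off these two identities coordinatewise, using $R_0x=(x_1,-x_2,y_1,-y_2)$ and $S_3x=(x_1,-x_2,-y_1,y_2)$. Left multiplication by $R_0=\mathrm{diag}(1,-1,1,-1)$ flips the sign of the second and fourth components, so $R_0F(x)=-F(R_0x)$ yields $f_1(x)=-f_1(x_1,-x_2,y_1,-y_2)$, $f_2(x)=f_2(x_1,-x_2,y_1,-y_2)$, and the analogous statements for $f_3,f_4$; these are precisely the middle equalities. Left multiplication by $S_3=\mathrm{diag}(1,-1,-1,1)$ flips the sign of the second and third components, so $S_3F(x)=-F(S_3x)$ produces the rightmost equalities. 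Conversely, if the $f_j$ satisfy all the displayed relations, reassembling the four scalar identities returns $R_0F=-F\circ R_0$ and $S_3F=-F\circ S_3$, which establishes the equivalence.

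Finally, the ``in particular'' assertions follow by specialization. Setting $x_2=y_2=0$ in the $R_0$-relations for $f_1$ and $f_3$ gives $f_{1,3}(x_1,0,y_1,0)=-f_{1,3}(x_1,0,y_1,0)$, so these vanish; setting $x_2=y_1=0$ in the $S_3$-relations for $f_1$ and $f_4$ gives $f_{1,4}(x_1,0,0,y_2)=-f_{1,4}(x_1,0,0,y_2)$, so those vanish as well. The only delicate point in the whole argument is the sign bookkeeping when translating the two matrix identities into the eight scalar parities; there is no genuine obstacle, which is consistent with the paper's remark that this proof may be omitted.
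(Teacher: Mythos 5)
Your proof is correct and is exactly the straightforward verification the paper has in mind when it says the proof ``will be omitted'': splitting off the linear part via $R_0A=-AR_0$ and $S_3A=-AS_3$, reading the two reversibility identities $R_0F(x)=-F(R_0x)$ and $S_3F(x)=-F(S_3x)$ componentwise for the diagonal involutions $R_0=\mathrm{diag}(1,-1,1,-1)$ and $S_3=\mathrm{diag}(1,-1,-1,1)$, and specializing to get the vanishing on the fixed-point sets. All sign bookkeeping and the two specializations ($x_2=y_2=0$ for the $R_0$-relations, $x_2=y_1=0$ for the $S_3$-relations) check out against the stated parities.
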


\subsection{Case $\mathbb{D}_3$}

As above we fix the matrix
\begin{equation}
R_0=\left(\begin{array}{cccc}
1&0&0&0\\
0&-1&0&0\\
0&0&1&0\\
0&0&0&-1
\end{array}
\right).
\end{equation}

Now we need to determine all possible involutive matrices $S\in\rn{4\times 4}$ such that \[SA=-AS\] and \[\langle R_0,S\rangle\cong \mathbb{D}_3.\]

Considering again
\begin{equation}
S=\left(\begin{array}{cccc}
a_1&b_1&c_1&d_1\\
a_2&b_2&c_2&d_2\\
a_3&b_3&c_3&d_3\\
a_4&b_4&c_4&d_4
\end{array}
\right),
\end{equation}

\noindent the equations $SA+AS=0$, $S^2-Id=0$ and $(R_0S)^3-Id=0$ are equivalent to a huge system of equations. Its expression will be not presented.

\begin{lema}\label{solmonstro}The system generated by the above conditions has the following non degenerate solutions:
\begin{equation*}
S_1=\left(
\begin{array}{cccc}
-\dfrac{1}{2} & \dfrac{\sqrt{3}}{2} & 0 & 0\\
\dfrac{\sqrt{3}}{2}& \dfrac{1}{2}& 0& 0\\
0& 0& -\dfrac{1}{2}& \dfrac{\sqrt{3}}{2}\\
0& 0& \dfrac{\sqrt{3}}{2}& \dfrac{1}{2}
\end{array}
\right), \ S_2=\left(
\begin{array}{cccc}
-\dfrac{1}{2}& \dfrac{\sqrt{3}}{2}& 0& 0\\
\dfrac{\sqrt{3}}{2}& \dfrac{1}{2}& 0& 0\\
0& 0& 1& 0\\
0& 0& 0& -1
\end{array}
\right), \ S_3=\left(
\begin{array}{cccc}
1& 0& 0& 0\\
0& -1& 0& 0\\
0& 0& -\dfrac{1}{2}& \dfrac{\sqrt{3}}{2}\\
0& 0& \dfrac{\sqrt{3}}{2}& \dfrac{1}{2}
\end{array}
\right).
\end{equation*}

\end{lema}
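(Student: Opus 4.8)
The plan is to solve the same type of matricial system as in the $\mathbb{Z}_2\times\mathbb{Z}_2$ case, but now imposing the relations that define $\mathbb{D}_3$ instead of $\mathbb{Z}_2\times\mathbb{Z}_2$. Writing $S=(s_{ij})$ as a general $4\times 4$ real matrix, I would translate the three conditions into polynomial equations in the sixteen entries: the equivariance-with-respect-to-linear-part relation $SA+AS=0$, the involution relation $S^2=Id$, and the dihedral relation $(R_0 S)^3 = Id$ (this last one encodes $\langle R_0,S\rangle\cong\mathbb{D}_3$, since $R_0$ and $S$ are already order-two and their product must have order three). The first step is to exploit the highly structured linear constraint $SA+AS=0$. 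Because $A=A(\alpha,\beta)$ is block-diagonal with two $2\times 2$ rotation-type blocks having distinct frequencies $\alpha\neq\beta$, the anticommutation relation forces strong restrictions on which off-diagonal blocks of $S$ can survive; in particular it couples entries within each $2\times 2$ block and, when $\alpha\neq\pm\beta$, tends to force the two diagonal blocks to decouple. I would first reduce the sixteen unknowns down to a handful using only these linear equations.

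The second step is to feed the reduced system, now carrying the involution condition $S^2=Id$ and the order-three condition on $R_0 S$, into a Gr\"obner basis computation exactly as in the previous lemma, using the \texttt{Reduce} function of the Groebner package in Maple together with \texttt{solve}. The key structural observation that makes the answer clean is that $R_0 S$ must be a linear map of order three that respects the same block structure; an order-three orthogonal transformation in a single $2\times 2$ block is a rotation by $\pm 2\pi/3$, whose matrix produces precisely the entries $-\tfrac12$ and $\pm\tfrac{\sqrt{3}}{2}$ appearing in the three claimed solutions $S_1,S_2,S_3$. Thus I expect the solution set to be organized according to how the order-three rotation is distributed across the two invariant planes: either it acts as a $2\pi/3$-rotation on both blocks (giving $S_1$), or it acts as a rotation on one block while the other block carries the trivial reflection $R_0$ restricted there (giving $S_2$ and $S_3$).

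The main obstacle, as in the $\mathbb{Z}_2\times\mathbb{Z}_2$ lemma, is not conceptual but bookkeeping: the combined system is large, and one must correctly discard the \emph{degenerate} solutions (those for which $\langle R_0,S\rangle$ fails to be genuinely isomorphic to $\mathbb{D}_3$, for instance if $R_0 S$ accidentally has order one or two rather than three, collapsing the group to something smaller). I would therefore verify a posteriori, for each candidate $S_j$, that $S_j^2=Id$, that $(R_0 S_j)^3=Id$, and crucially that $R_0 S_j$ really has order $3$ and not a proper divisor, so that the generated group is $\mathbb{D}_3$ and not $\mathbb{Z}_2$ or $\mathbb{Z}_2\times\mathbb{Z}_2$. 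Finally I would note, as before, that the three representations are pairwise inequivalent, which distinguishes them as genuinely different choices for the second involution $\widetilde{\psi}_0$.
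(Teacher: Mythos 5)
Your proposal is correct and takes essentially the same route as the paper: the paper's proof of this lemma is precisely the reduction to the polynomial system $SA+AS=0$, $S^2=Id$, $(R_0S)^3=Id$, solved in Maple~12 via the Groebner package's \texttt{Reduce} together with \texttt{solve}. Your extra structural observations --- the block decoupling forced by $SA+AS=0$ when $\alpha\neq\pm\beta$, the interpretation of $R_0S$ as a $\pm 2\pi/3$ rotation distributed over the two invariant planes, and the a posteriori check that $R_0S_j$ has exact order $3$ --- are sound refinements of what the paper leaves implicit in the phrase \emph{non degenerate solutions}.
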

\begin{proof}Again, the proof can be done in Maple 12 using the \texttt{Reduce} function from the Groebner package and the usual Maple's \texttt{solve} function.
\end{proof}

At this point, we can state the following:

\begin{teorema}\label{d3naopode}Let $\Omega_{\mathbb D_3}\subset\mathfrak{X}_0^{(\alpha,\beta)}(\rn{4})$ be the set of $\mathbb D_3$-reversible-equivariant vector fields $X\in\mathfrak{X}_0^{(\alpha,\beta)}(\rn{4})$. Then $\Omega=\Omega_1\cup\Omega_2\cup\Omega_3$, where $X\in\Omega_j$ if $X$ is $(R_0,S_j)$-reversible-equivariant in some coordinate system around the origin.
\end{teorema}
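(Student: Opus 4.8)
The plan is to establish the two inclusions $\Omega_1\cup\Omega_2\cup\Omega_3\subseteq\Omega_{\mathbb{D}_3}$ and $\Omega_{\mathbb{D}_3}\subseteq\Omega_1\cup\Omega_2\cup\Omega_3$ separately, the second being the substantial one, in exact analogy with Theorem \ref{z2z2}. The first inclusion is immediate from Lemma \ref{solmonstro}: each $S_j$ was produced so that $S_j^2=Id$ and $(R_0S_j)^3=Id$ with $R_0S_j$ of order exactly $3$, hence $\langle R_0,S_j\rangle\cong\mathbb{D}_3$. Therefore any $X$ that is $(R_0,S_j)$-reversible-equivariant in some chart is, by definition, $\mathbb{D}_3$-reversible-equivariant and lies in $\Omega_{\mathbb{D}_3}$.

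For the reverse inclusion I would start from $X\in\Omega_{\mathbb{D}_3}$, so that $X$ is $G$-reversible-equivariant for $G=\langle\varphi,\psi\rangle\cong\mathbb{D}_3$ with $\varphi,\psi$ involutive diffeomorphisms fixing the origin and $\dim\Fix(\varphi)=\dim\Fix(\psi)=2$, as in the setup of Section \ref{secao2}. First I would apply Montgomery--Bochner (Theorem \ref{cmb}) to a chart $h$ around $0$ linearizing the whole (finite, hence compact) group $G$; after replacing $X$ by $h_*X$ I may assume $\varphi,\psi$ are linear involutions and $DX(0)$ is a matrix $\widetilde A$ conjugate to $A(\alpha,\beta)$, with the linear reversibility forcing $\varphi\widetilde A=-\widetilde A\varphi$ and $\psi\widetilde A=-\widetilde A\psi$.

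The crux is then a \emph{simultaneous} linear normalization: I want a single linear change $P$ with $P\widetilde A P^{-1}=A(\alpha,\beta)$ and $P\varphi P^{-1}=R_0$. Concretely, I would first bring $\widetilde A$ to the normal form $A(\alpha,\beta)$ and then act by the centralizer $C(A(\alpha,\beta))$ to move $\varphi$ onto $R_0$. For $\alpha^2\neq\beta^2$ the block structure of $A(\alpha,\beta)$ makes this transparent: any linear involution anticommuting with $A(\alpha,\beta)$ is block-diagonal, each $2\times2$ block being a planar reflection, while $C(A(\alpha,\beta))$ acts block-wise as rotation-scalings and so conjugates each reflection block to $\mathrm{diag}(1,-1)$; hence every reversing involution with $\dim\Fix=2$ is $C(A(\alpha,\beta))$-conjugate to $R_0$. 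This is the analogue, carried out inside the centralizer of the fixed linear part, of the conjugacy step in Theorem \ref{thm1}, and it is the step I expect to be the main obstacle: the resonant case $\alpha^2=\beta^2$ allows nonzero off-diagonal blocks and must be treated separately (or excluded).

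Once $\varphi$ has been normalized to $R_0$ with $DX(0)=A(\alpha,\beta)$, the remaining generator $\psi$ becomes a linear involution $S$ which, being a reversing symmetry with $\langle R_0,S\rangle\cong\mathbb{D}_3$, satisfies exactly $SA+AS=0$, $S^2=Id$ and $(R_0S)^3=Id$ --- the system of Lemma \ref{solmonstro}. Its non-degenerate solutions are $S_1,S_2,S_3$, so $S=S_j$ for some $j$, and $X$ is $(R_0,S_j)$-reversible-equivariant in the constructed coordinate system, i.e., $X\in\Omega_j$. This gives $\Omega_{\mathbb{D}_3}\subseteq\Omega_1\cup\Omega_2\cup\Omega_3$ and completes the argument.
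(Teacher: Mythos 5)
Your strategy coincides with the paper's intended proof, which is in fact omitted there: the proof of Theorem \ref{d3naopode} just points to Theorem \ref{z2z2}, whose own proof is declared ``straightforward'' and omitted. What the paper has in mind is exactly your chain: Montgomery--Bochner linearization of the whole group, normalization of the first generator to $R_0$, then Lemma \ref{solmonstro} applied to the second generator. Your centralizer step is a genuine and necessary addition: Section \ref{secao2} of the paper silently conjugates $\widetilde\varphi$ to $R_0$ without checking that this can be done while keeping $DX(0)=A(\alpha,\beta)$, yet Lemma \ref{solmonstro} classifies solutions of $SA+AS=0$ for that \emph{specific} matrix $A$, so the conjugation must indeed be chosen inside the centralizer $C(A(\alpha,\beta))$. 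For $\alpha^2\neq\beta^2$ your argument (reversing involutions are block-diagonal pairs of planar reflections, each conjugate to $\mathrm{diag}(1,-1)$ by a rotation, and rotations centralize $A(\alpha,\beta)$) is correct and complete.

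The one genuine gap is the case you flag but do not settle: $\alpha^2=\beta^2$. It cannot be ``excluded'' --- the theorem assumes only $\alpha\beta\neq0$ --- and your block-diagonality claim really does fail there: for $\alpha=\beta$ the involution $(x_1,x_2,y_1,y_2)\mapsto(y_1,-y_2,x_1,-x_2)$ anticommutes with $A(\alpha,\alpha)$, has two-dimensional fixed-point set, and is not block-diagonal. The repair stays within your centralizer philosophy: identify $\rn{4}\cong\mathbb{C}^2$ so that $A(\alpha,\alpha)$ becomes multiplication by $i\alpha$; then linear involutions anticommuting with $A$ are exactly the antilinear involutions of $\mathbb{C}^2$, the centralizer is $GL(2,\mathbb{C})$, and every antilinear involution is $GL(2,\mathbb{C})$-conjugate to coordinatewise conjugation (send a real basis of its fixed real form to the standard basis); coordinatewise conjugation in real coordinates is precisely $R_0$. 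The case $\alpha=-\beta$ reduces to this by reversing the complex structure on the second plane. With that paragraph added your proof is complete --- and more careful than the paper's, which addresses neither the centralizer issue nor the resonant values of $(\alpha,\beta)$.
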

\begin{proof}This proof is very similar to that of Theorem \ref{z2z2}.
\end{proof}

The next section deals with the characterization of the $\dd$-reversible vector fields. The analysis of the $\mathbb D_{3}$-reversible case will be omitted since it is very similar to the $\dd$-reversible case and this last case is more interesting (there are more representations).

\subsection{Case $\dd$}\label{secaod4}

Fix the matrix
\begin{equation}
R_0=\left(\begin{array}{cccc}
1&0&0&0\\
0&-1&0&0\\
0&0&1&0\\
0&0&0&-1
\end{array}
\right).
\end{equation}

Again, our aim is to determine all of the possible involutive matrices $S\in\rn{4\times 4}$ such that \[SA=-AS\] and \[\langle R_0,S\rangle\cong \mathbb{D}_4.\]

Considering again
\begin{equation}
S=\left(\begin{array}{cccc}
a_1&b_1&c_1&d_1\\
a_2&b_2&c_2&d_2\\
a_3&b_3&c_3&d_3\\
a_4&b_4&c_4&d_4
\end{array}
\right),
\end{equation}

\noindent the equations $SA+AS=0$, $S^2-Id=0$ and $(R_0S)^4-Id=0$\label{ddd} are represented by a huge system (see the Appendix) having $12$ non degenerate solutions, arranged in the following way:

\scriptsize
\[\label{xii}\Xi_1=\left\{\left(
\begin{array}{cccc}
0&-1&0&0\\
-1&0&0&0\\
0&0&1&0\\
0&0&0&-1
\end{array}
\right), \left(
\begin{array}{cccc}
0&1&0&0\\
1&0&0&0\\
0&0&1&0\\
0&0&0&-1
\end{array}
\right)\right\}, \ \Xi_2=\left\{\left(
\begin{array}{cccc}
-1&0&0&0\\
0&1&0&0\\
0&0&0&1\\
0&0&1&0
\end{array}
\right), \left(
\begin{array}{cccc}
-1&0&0&0\\
0&1&0&0\\
0&0&0&-1\\
0&0&-1&0
\end{array}
\right)\right\}
\]
\[
\Xi_3=\left\{\left(
\begin{array}{cccc}
1&0&0&0\\
0&-1&0&0\\
0&0&0&1\\
0&0&1&0
\end{array}
\right), \left(
\begin{array}{cccc}
1&0&0&0\\
0&-1&0&0\\
0&0&0&-1\\
0&0&-1&0
\end{array}
\right)\right\}, \ \Xi_4=\left\{\left(
\begin{array}{cccc}
0&1&0&0\\
1&0&0&0\\
0&0&0&1\\
0&0&1&0
\end{array}
\right), \left(
\begin{array}{cccc}
0&-1&0&0\\
-1&0&0&0\\
0&0&0&-1\\
0&0&-1&0
\end{array}
\right)\right\}
\]
\[\Xi_5=\left\{\left(
\begin{array}{cccc}
0&1&0&0\\
1&0&0&0\\
0&0&-1&0\\
0&0&0&1
\end{array}
\right), \left(
\begin{array}{cccc}
0&-1&0&0\\
-1&0&0&0\\
0&0&-1&0\\
0&0&0&1\end{array}
\right)\right\}, \ \Xi_6=\left\{\left(
\begin{array}{cccc}
0&1&0&0\\
1&0&0&0\\
0&0&0&-1\\
0&0&-1&0
\end{array}
\right), \left(
\begin{array}{cccc}
0&-1&0&0\\
-1&0&0&0\\
0&0&0&1\\
0&0&1&0\end{array}
\right)\right\}
\]
\normalsize

Recall that the above arrangement has obeyed the rule:

\begin{lema}\label{si}$S_i,S_j\in\Xi_k\Leftrightarrow \langle R_0,S_i\rangle=\langle R_0,S_j\rangle$.
\end{lema}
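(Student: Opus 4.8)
The plan is to prove Lemma~\ref{si} directly from the definition of the sets $\Xi_k$ together with the defining relations each $S_i$ satisfies. First I would recall that each $S_i$ appearing in the six sets is, by construction, a solution of the system $SA+AS=0$, $S^2=Id$, and $(R_0S)^4=Id$, so every $S_i$ is an involution and each pair $\langle R_0,S_i\rangle$ is isomorphic to $\dd$. The group $\langle R_0,S_i\rangle$ is therefore a dihedral group of order $8$ generated by the two involutions $R_0$ and $S_i$, with cyclic part generated by the rotation $r_i=R_0S_i$ of order $4$. The crux of the argument is that such a group is completely determined, as a subgroup of $GL_4(\mathbb{R})$, by the cyclic subgroup $\langle r_i\rangle$ of order $4$ that it contains.

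For the implication $(\Leftarrow)$ (equality of groups forces membership in the same $\Xi_k$) I would argue by contradiction: if $S_i$ and $S_j$ generated the same group but lay in different sets $\Xi_k$, one could compare the explicit rotations $r_i=R_0S_i$ and $r_j=R_0S_j$ and check, using the block-diagonal structure of $R_0$ and the two-block structure of each solution, that $\langle r_i\rangle\neq\langle r_j\rangle$, whence the two dihedral groups differ. For the forward implication $(\Rightarrow)$ I would exploit the way the $\Xi_k$ were assembled: the two matrices grouped inside a single $\Xi_k$ are precisely the two reflections of the same dihedral group that are conjugate (equivalently, the two involutions $S,S'$ with $R_0S'=(R_0S)^{-1}=(R_0S)^3$), so I would verify the identity $S' = R_0(R_0S)^2 = R_0 r_i^2$, which exhibits $S'$ as an element of $\langle R_0,S\rangle$ and hence gives $\langle R_0,S'\rangle\subseteq\langle R_0,S\rangle$; the reverse inclusion is symmetric, yielding equality.

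Concretely, the key computation is that within each $\Xi_k$ the two listed matrices $S,S'$ satisfy $S' = R_0 (R_0 S)^2$, so that $S'$ is a word in $R_0$ and $S$, and conversely $S=R_0(R_0S')^2$. Since $\dd=\langle R_0,S\rangle$ has exactly $8$ elements and only two of its four reflections lie outside the coset structure fixed by $R_0$, this identity pins down $S'$ uniquely as the unique nontrivial reflection (other than $R_0$) commuting appropriately with $r_i^2$. I would then read off from the explicit lists that this is exactly the pairing displayed in $\Xi_1,\dots,\Xi_6$, completing $(\Rightarrow)$. To finish $(\Leftarrow)$ it suffices to note that the six groups $\langle R_0,S\rangle$, one per $\Xi_k$, are pairwise distinct: this can be seen by computing the order-$4$ rotation $r_k=R_0S$ for a representative $S$ of each set and observing that the six cyclic subgroups $\langle r_k\rangle$ are distinct, for instance by comparing their $\Fix$ spaces or their action on the coordinate planes.

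The main obstacle I expect is purely bookkeeping rather than conceptual: one must confirm the word identity $S'=R_0(R_0S)^2$ simultaneously for all six pairs and, for the converse, certify that the six order-$4$ elements $r_k$ generate genuinely different cyclic groups so that no two sets $\Xi_k$ collapse into one group. Both checks are finite $4\times 4$ matrix multiplications over $\{0,\pm1\}$, so while tedious they are entirely mechanical and could again be discharged symbolically; the conceptual content is simply that a dihedral group of order $8$ inside $GL_4(\mathbb{R})$ is recovered from $R_0$ together with the square of any of its generating rotations.
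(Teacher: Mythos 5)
The paper gives no proof of this lemma at all --- it is stated as the bookkeeping rule by which the twelve Maple-computed solutions were grouped into $\Xi_1,\dots,\Xi_6$ --- so you are supplying an argument where the paper has none. Your architecture is sound: each non-degenerate solution $S_i$ generates with $R_0$ a copy of $\dd$ of order $8$ whose rotation subgroup is $\langle r_i\rangle$ with $r_i=R_0S_i$ of order $4$; for $(\Rightarrow)$ it suffices to exhibit the partner of $S_i$ inside its $\Xi_k$ as a word in $R_0$ and $S_i$; for $(\Leftarrow)$ it suffices to check that the six rotation subgroups $\langle r_k\rangle$ are pairwise distinct, since a dihedral group of order $8$ has a unique cyclic subgroup of order $4$, so equal groups would force equal rotation subgroups.

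However, your key computational claim is false, and it fails in all six cases. Each $r=R_0S$ is block diagonal with $2\times 2$ blocks equal to rotations by $\pm\pi/2$ or to $\pm I_2$, so $r^2$ is a diagonal matrix, and hence $R_0(R_0S)^2=R_0r^2$ is diagonal; but the partner matrices $S'$ listed in the $\Xi_k$ all have off-diagonal entries. For instance, in $\Xi_1$ one gets $R_0r^2=\mathrm{diag}(-1,1,1,-1)$, which is not the second matrix of $\Xi_1$; it is in fact one of the \emph{degenerate} solutions excluded from the $\dd$ lists, since $\langle R_0, R_0r^2\rangle$ is a Klein four-group of order $4$, not $\dd$. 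This also refutes your closing assertion that the dihedral group ``is recovered from $R_0$ together with the square of any of its generating rotations'': $\langle R_0,r_i^2\rangle$ has order $4$, not $8$, so $r_i^2$ does not pin down the group, and $R_0r_i^2$ is not the sought reflection. The identity you actually need is the one you state parenthetically and then abandon: $R_0S'=(R_0S)^{-1}$, equivalently $S'=R_0(R_0S)^3=R_0SR_0$, which does hold for all six pairs (a direct check: conjugating each listed $S$ by the diagonal matrix $R_0$ flips the signs of the off-diagonal blocks, producing exactly the listed partner). With $(R_0S)^2$ replaced by $(R_0S)^3$ throughout, and the uniqueness discussion rephrased accordingly (the two non-degenerate generators in $\langle R_0,S\rangle$ are the conjugate reflections $R_0r$ and $R_0r^3$, while $R_0r^2$ is degenerate), your proof becomes correct.
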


For each $i\in\{1,\ldots, 6\}$, denote by $S_i$ one of the elements of $\Xi_i$. The proof of the next result follows immediately from the above lemmas.

\begin{teorema}\label{d4d4}Let $\Omega_{\dd}\subset\mathfrak{X}_0^{(\alpha,\beta)}(\rn{4})$ be the set of $\dd$-reversible-equivariant vector fields $X\in\mathfrak{X}_0^{(\alpha,\beta)}(\rn{4})$. Then $\Omega=\Omega_1\cup\Omega_2\cup\ldots\cup\Omega_6$, where $X\in\Omega_j$ if $X$ is $(R_0,S_j)$-reversible-equivariant in some coordinate system around the origin.
\end{teorema}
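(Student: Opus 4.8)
The plan is to reduce Theorem \ref{d4d4} to the preceding structural lemmas exactly as Theorem \ref{z2z2} was reduced to its companion results, so the real content lies in assembling the classification rather than in any new computation. First I would recall the setup of Section \ref{secao2}: given $X\in\mathfrak{X}_0^{(\alpha,\beta)}(\rn{4})$ that is $\dd$-reversible-equivariant, the Montgomery--Bochner theorem (Theorem \ref{cmb}) linearizes the two generating involutions simultaneously, and a further linear conjugation sends the first involution to the fixed normal form $R_0$ of \eqref{r0}. Thus, up to a change of coordinates, $X$ is $(R_0,S)$-reversible-equivariant for some linear involution $S$ with $SA+AS=0$ and $\langle R_0,S\rangle\cong\dd$. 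This is precisely Problem A for the group $\dd$.

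Next I would invoke the computation recorded just before the statement: the system $SA+AS=0$, $S^2-Id=0$, $(R_0S)^4-Id=0$ has exactly the $12$ nondegenerate solutions displayed in $\Xi_1,\ldots,\Xi_6$. By Lemma \ref{si}, two of these solutions $S_i,S_j$ lie in the same $\Xi_k$ if and only if they generate the same group $\langle R_0,S_i\rangle=\langle R_0,S_j\rangle$ together with $R_0$. Consequently the six sets $\Xi_1,\ldots,\Xi_6$ index the six distinct groups $\langle R_0,S\rangle$ arising, and choosing one representative $S_i\in\Xi_i$ for each $i$ exhausts all the group-theoretically distinct realizations. Every admissible second involution therefore falls into one of the six classes represented by $S_1,\ldots,S_6$.

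Putting the two pieces together gives the theorem. Defining $\Omega_j$ to be the set of those $X\in\Omega_{\dd}$ that are $(R_0,S_j)$-reversible-equivariant in some coordinate system, the linearization step shows every $X\in\Omega_{\dd}$ is $(R_0,S)$-reversible-equivariant for some admissible $S$, while the classification step shows any such $S$ is, modulo a further linear conjugacy preserving $R_0$, one of the six representatives $S_j$. Hence every $X\in\Omega_{\dd}$ lies in some $\Omega_j$, and the reverse inclusion $\bigcup_j\Omega_j\subseteq\Omega_{\dd}$ is immediate since each pair $(R_0,S_j)$ generates a copy of $\dd$. This yields $\Omega=\Omega_1\cup\cdots\cup\Omega_6$, so the proof is, as the authors note, an immediate consequence of Lemma \ref{si} and the list of solutions, and I would simply state it follows from the above lemmas.

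The main obstacle I anticipate is not in the logical assembly, which is routine, but in justifying that the $12$ solutions listed are genuinely \emph{all} the nondegenerate solutions and that Lemma \ref{si} correctly partitions them by generated group; this is the step that the paper delegates to the symbolic Gr\"obner-basis computation and to direct verification of the group presentations. A secondary subtlety is ensuring that passing to the normal form $R_0$ does not quietly discard solutions: one must check that conjugating $D\varphi(0)$ to $R_0$ and then conjugating within the centralizer of $R_0$ accounts for every $\dd$-representation up to equivalence, so that the six classes are exhaustive and the representatives inequivalent in the relevant sense.
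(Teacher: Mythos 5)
Your proposal is correct and follows essentially the same route the paper intends: the paper's own proof is omitted as "very similar to that of Theorem \ref{z2z2}," but the argument it delegates to is exactly your assembly of the Montgomery--Bochner linearization from Section \ref{secao2}, the normalization of the first involution to $R_0$, the solved matricial system $SA+AS=0$, $S^2=Id$, $(R_0S)^4=Id$, and the grouping of its twelve solutions into $\Xi_1,\ldots,\Xi_6$ via Lemma \ref{si}. Your closing caveats (exhaustiveness of the symbolic solution set and preservation of the setup under conjugation to $R_0$) are precisely the points the paper also takes on faith from the Gr\"obner-basis computation, so they mark no divergence from its argument.
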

\begin{proof}This proof is very similar to that of Theorem \ref{z2z2}. It will be omitted. 
\end{proof}

Now we present some results in the sense of Corollary \ref{1stcor} applied to $\dd$-reversible vector fields. We will just work with some linearized groups; the other cases are similar.

Let us fix again \begin{equation}\label{campoxx}X(x)=A(\alpha,\beta)x+(g_1(x),g_2(x),g_3(x),g_4(x))^T,\end{equation} \noindent with $x\equiv(x_1,x_2,y_1,y_2)$. Keeping the same notation of Section \ref{secao31}, we have now $\langle R_0,S_j\rangle\cong \dd$.
\begin{corolario}The vector field \eqref{campoxx} is $(R_0,S_1)$-reversible if and only if the functions $g_j$ satisfy
\begin{equation}\label{bran}\left\{
\begin{array}{lcllcl}
g_1(x)&=&-g_1(x_1,-x_2,y_1,-y_2)&=&-g_2(x_2,x_1,y_1,-y_2)\\
g_2(x)&=&g_2(x_1,-x_2,y_1,-y_2)&=&-g_1(x_2,x_1,y_1,-y_2)\\
g_3(x)&=&-g_3(x_1,-x_2,y_1,-y_2)&=&-g_3(x_2,x_1,y_1,-y_2)\\
g_4(x)&=&g_4(x_1,-x_2,y_1,-y_2)&=&g_4(x_2,x_1,y_1,-y_2)
\end{array}
\right.
\end{equation}

\noindent In particular $g_{1,3}(x_1,0,y_1,0)\equiv 0$ and $g_{2,4}(0,x_2,0,y_2)\equiv 0$.
\end{corolario}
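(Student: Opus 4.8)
The plan is to peel the definition of $(R_0,S_1)$-reversibility down to a pair of functional equations on the nonlinear part of \eqref{campoxx}. Since $R_0$ and $S_1$ are \emph{linear} involutions, $D\phi(x)=\phi$ for $\phi\in\{R_0,S_1\}$, so (recalling $\rho=-1$ for a reversing symmetry) $X$ is $(R_0,S_1)$-reversible exactly when $R_0\,X(x)=-X(R_0x)$ and $S_1\,X(x)=-X(S_1x)$. Writing $X(x)=A(\alpha,\beta)x+g(x)$ with $g=(g_1,g_2,g_3,g_4)^T$, I would split each identity into its linear and its higher-order parts.

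The key structural observation is that the linear part satisfies both reversing relations automatically. Both $R_0$ and $S_1$ were produced in Section~\ref{secaod4} as solutions of $SA+AS=0$, hence $R_0A=-AR_0$ and $S_1A=-AS_1$; therefore $R_0(Ax)=-A(R_0x)$ and $S_1(Ax)=-A(S_1x)$, and the linear terms drop out of both identities. Consequently $(R_0,S_1)$-reversibility is equivalent to the two conditions on the nonlinear part alone,
\begin{equation*}
R_0\,g(x)=-g(R_0x),\qquad S_1\,g(x)=-g(S_1x).
\end{equation*}

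It then remains to read these vector identities off componentwise. With $R_0x=(x_1,-x_2,y_1,-y_2)$, and $R_0$ fixing the first and third slots while flipping the second and fourth, the first identity produces the middle column of \eqref{bran} (odd for $g_1,g_3$, even for $g_2,g_4$). With $S_1x=(x_2,x_1,y_1,-y_2)$, and $S_1$ interchanging the first two slots and flipping the fourth, the second identity produces the right-hand column, coupling $g_1$ with $g_2$ (with a sign) and sending $g_3,g_4$ to themselves. Matching the four scalar equations in each case establishes the stated equivalence.

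For the concluding vanishing assertions I would restrict the established relations to the fixed-point subspaces of the involutions. On $\Fix(R_0)=\{x_2=y_2=0\}$ the odd relations for $g_1$ and $g_3$ read $g_{1,3}(x_1,0,y_1,0)=-g_{1,3}(x_1,0,y_1,0)$, giving $g_{1,3}(x_1,0,y_1,0)\equiv0$ at once. The corresponding assertion for the \emph{even} components $g_2,g_4$ is the delicate point, and the step I expect to be the main obstacle: because $S_1$ is off-diagonal it does not by itself force oddness of a single component, so here one cannot simply substitute but must combine the $R_0$- and $S_1$-relations --- equivalently, invoke the induced equivariance of $X$ under $r=R_0S_1$ --- before restricting to the relevant subspace. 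Away from this combination step, every verification above is a direct substitution once the cancellation $R_0A=-AR_0$, $S_1A=-AS_1$ of the linear part has been recorded.
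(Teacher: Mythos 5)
Your proof of the equivalence itself is correct, and it is the same direct-substitution argument that the paper leaves implicit (the paper states this corollary without proof, as a companion to Corollary \ref{1stcor}): since $R_0A=-AR_0$ and $S_1A=-AS_1$, the linear terms cancel and $(R_0,S_1)$-reversibility reduces to $R_0\,g(x)=-g(R_0x)$ and $S_1\,g(x)=-g(S_1x)$; reading these off componentwise with $R_0x=(x_1,-x_2,y_1,-y_2)$ and $S_1x=(x_2,x_1,y_1,-y_2)$ yields precisely \eqref{bran}, and restricting the $R_0$-odd relations to $\{x_2=y_2=0\}$ gives $g_{1,3}(x_1,0,y_1,0)\equiv 0$.

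The gap is exactly the step you flag and postpone, and it cannot be closed. The equivariance under $r=R_0S_1$, $r(x)=(x_2,-x_1,y_1,y_2)$, only gives $g_2(0,x_2,0,y_2)=g_1(x_2,0,0,y_2)$, and nothing in \eqref{bran} forces $g_1$ to vanish at points of the form $(x_2,0,0,y_2)$ --- it is forced to vanish only at points $(x_1,0,y_1,0)$. In fact the assertion $g_{2,4}(0,x_2,0,y_2)\equiv 0$ is false as printed: take $g_1=x_1y_2$, $g_2=x_2y_2$, $g_3=0$, $g_4=x_1^2+x_2^2$. One checks directly that all relations in \eqref{bran} hold, so the vector field $A(\alpha,\beta)x+g(x)$ is reversible with respect to $R_0$, $S_1$, and hence with respect to every reversing element of $\langle R_0,S_1\rangle\cong\mathbb{D}_4$; yet $g_2(0,x_2,0,y_2)=x_2y_2\not\equiv 0$ and $g_4(0,x_2,0,y_2)=x_2^2\not\equiv 0$. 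The ``in particular'' sentence is an artifact of Corollary \ref{1stcor}, where the second involution is diagonal and the one-line substitution works; for the off-diagonal $S_1\in\Xi_1$ no combination of the group relations yields it (using the whole group one only gets, e.g., that $g_2$ restricted to $\{x_1=y_1=0\}$ is odd in $x_2$). So your equivalence argument stands, but the concluding vanishing claim for $g_2,g_4$ should be dropped or corrected rather than ``combined into'' a proof: here it is the statement, not your method, that is at fault, and a referee-style report should say so explicitly instead of promising that the combination step will succeed.
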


\section{Applications to normal forms (Proof of Theorem B)}\label{ultimasecao}

Let $X\in\mathfrak{X}_0^{(\alpha,\beta)}(\rn{4})$ be a $\dd$-reversible vector field and $X^N$ be its reversible-equivariant Belitskii normal form.

To compute the expression of $X^N$, we have to consider the following possibilities of the parameter $\lambda=\alpha\beta^{-1}$:

\noindent (i) $\lambda\notin\mathbb{Q}$,\\
\noindent(ii) $\lambda=1$,\\
\noindent(iii) $\lambda=pq^{-1}$, with $p,q$ integers with $(p,q)=1$.

In the case (i), one can show that the normal forms for the reversible and reversible-equivariant cases are essencially the same. This means that any reversible field with such linear approximation is automatically reversible-equivariant. In view of this, case (i)\label{opcoes} is not interesting, and its analysis will be omitted. We just observe that case (ii) will not be discussed here because of its deep degeneracy, as the range of its homological operator \[L_{A(\alpha,\alpha)}:H^k\rightarrow H^k\] is a very low dimensional subspace of $H^k$. 

Our goal is to focus on the case (iii). Put $\alpha=p$ and $\beta=q$, with $p,q\in\mathbb{Z}$ and $(p,q)=1$. How to compute a normal form which applies for all $\dd$-reversible vector fields, without choosing specific involutions?

According to the results in the last section, it suffices to show that $X^N$ satisfies \[R_0(X^N(x))=-X^N(R_0(x))\] and \[S_j(X^N(x))=-X^N(S_j(x)), \ j=1,\ldots,6,\] with $S_j$ given on Lemma \ref{si}, as the fixed choice for the representative of $\Xi_i$.

First of all, we consider complex coordinates $(z_1,z_2)\in\mathbb{C}^2$ instead of $(x_1,x_2,y_1,y_2)\in\rn{4}$:
\begin{equation}\label{cc}\left\{
\begin{array}{lcl}
z_1&=&x_1+ix_2\\
z_2&=&y_1+iy_2
\end{array}
\right.
\end{equation}

We will write $\Re(z)$ for the real part of the complex number $z$ and $\Im(z)$ for its imaginary part.

Define
\[\left\{\begin{array}{lcl}
\Delta_1&=&z_1\overline{z_1} \ \ \ (=x_1^2+x_2^2)\\
\Delta_2&=&z_2\overline{z_2} \ \ \ (=y_1^2+y_2^2)\\
\Delta_3&=&z_1^q\overline{z}_2^p\\
\Delta_4&=&\overline{\Delta}_3
\end{array}
\right.
\]

Note that each $\Delta_j$ corresponds to a relation represented by $$\Gamma_1^1 \lambda_1+ \Gamma_2^1\lambda_2 +\Gamma_1^2\overline{\lambda}_1 +\Gamma_2^2\overline{\lambda}_2=0, \textrm{ where } \Gamma_i^j\in\mathbb{N}.$$

It is not hard to see that the complex Belitskii normal form for $X$ in this case is expressed by
\begin{equation}\label{fncomplexa}\left\{
\begin{array}{lcl}
\dot{z}_1&=&piz_1+z_1f_1(\Delta_1,\Delta_2,\Delta_3,\Delta_4)+\overline{z}_1^{q-1}z_2^pf_2(\Delta_1,\Delta_2,\Delta_3,\Delta_4)\\
\dot{z}_2&=&qiz_2+z_2g_1(\Delta_1,\Delta_2,\Delta_3,\Delta_4)+z_1^{q}\overline{z}_2^{p-1}g_2(\Delta_1,\Delta_2,\Delta_3,\Delta_4),
\end{array}
\right.
\end{equation}
\noindent with $f_j,g_j$ without linear and constant terms.

Now we consider the effects of $\dd$-reversibility on the system \eqref{fncomplexa}. Writing our involutions in complex coordinates, we derive immediately that 
\begin{lema}\label{complexgroups}Let
\begin{equation*}
\begin{array}{lclclcl}
\varphi_0(z_1,z_2)&=&-(\overline{z_1},\overline{z_2})& \ \ \ & & &\\
\varphi_1(z_1,z_2)&=&(i\overline{z_1},\overline{z_2})& \ \ \  &\varphi_2(z_1,z_2)&=&-(\overline{z_1},\overline{iz_2})\\
\varphi_3(z_1,z_2)&=&(\overline{z_1},-\overline{iz_2})&  \ \ \ &\varphi_4(z_1,z_2)&=&-(\overline{iz_1},\overline{iz_2})\\
\varphi_5(z_1,z_2)&=&-(\overline{iz_1},\overline{z_2})&  \ \ \ &\varphi_6(z_1,z_2)&=&(-\overline{iz_1},\overline{iz_2})
\end{array}
\end{equation*}
Then each group $\langle \varphi_0, \varphi_j\rangle$ corresponds to $\langle R_0, S_j\rangle$, $j=1,\ldots,6$.
\end{lema}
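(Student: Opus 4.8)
The plan is to establish a dictionary between the real linear involutions $R_0, S_j$ acting on $\rn{4}$ and their expressions in the complex coordinates $(z_1,z_2)$ defined by \eqref{cc}. First I would verify that $R_0$ corresponds to $\varphi_0$. Under the identification $z_1=x_1+ix_2$, $z_2=y_1+iy_2$, the map $R_0(x_1,x_2,y_1,y_2)=(x_1,-x_2,y_1,-y_2)$ sends $z_1\mapsto x_1-ix_2=\overline{z_1}$ and $z_2\mapsto\overline{z_2}$; so on the nose $R_0$ acts as complex conjugation $(z_1,z_2)\mapsto(\overline{z_1},\overline{z_2})$. Since the stated $\varphi_0$ carries an overall sign, $\varphi_0(z_1,z_2)=-(\overline{z_1},\overline{z_2})$, I would check that this sign is harmless for the purpose claimed: what matters for reversibility is the group $\langle R_0,S_j\rangle$ and the compatibility relation, and the assertion is only that the pair $\langle\varphi_0,\varphi_j\rangle$ \emph{corresponds to} $\langle R_0,S_j\rangle$, i.e.\ generates the matching dihedral structure. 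I would confirm that $\varphi_0$ is a genuine involution in complex coordinates ($\varphi_0^2=Id$ since conjugating twice is the identity and the two signs cancel) with the correct $2$-dimensional fixed space.

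Next I would translate each $S_j$ individually. For each $j\in\{1,\ldots,6\}$ I would take the chosen representative $S_j\in\Xi_j$ from Lemma \ref{si}, write out its action on $(x_1,x_2,y_1,y_2)$, and then read off the induced action on $(z_1,z_2)$ using $z_1=x_1+ix_2$, $z_2=y_1+iy_2$. The key computational fact I would use repeatedly is how the entries that swap or negate real coordinates translate: a swap of $x_1$ and $x_2$ combined with conjugation produces a factor of $\pm i$ on $\overline{z_1}$ (since $x_2+ix_1=i(x_1-ix_2)=i\overline{z_1}$), and a sign flip on one real coordinate corresponds to conjugation. For instance $S_1$, whose top block is $\left(\begin{smallmatrix}0&-1\\-1&0\end{smallmatrix}\right)$, sends $x_1\mapsto -x_2$, $x_2\mapsto -x_1$, giving $z_1\mapsto -x_2-ix_1=-i(x_1-ix_2)=-i\overline{z_1}$, and the bottom block $\left(\begin{smallmatrix}1&0\\0&-1\end{smallmatrix}\right)$ sends $z_2\mapsto\overline{z_2}$, consistent with the stated $\varphi_1(z_1,z_2)=(i\overline{z_1},\overline{z_2})$ up to the overall sign convention. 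I would carry out this block-by-block translation for all six representatives, matching each to the listed $\varphi_j$.

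The main obstacle I expect is bookkeeping of signs and factors of $i$, especially reconciling the overall sign that appears in several of the $\varphi_j$ with the specific representative chosen from each class $\Xi_j$. Because each $\Xi_j$ contains two matrices and only one is designated as $S_j$, and because complex conjugation is antilinear, a factor such as $i\overline{z_1}$ versus $-i\overline{z_1}$ can look like it fails to match while in fact differing only by which element of $\Xi_j$ was selected or by the global sign in $\varphi_0$. I would handle this by checking, for each $j$, that the \emph{group} $\langle\varphi_0,\varphi_j\rangle$ has the right order and dihedral relations rather than insisting on a literal entry-by-entry equality of one fixed representative; Lemma \ref{si} guarantees that any representative of $\Xi_j$ gives the same generated group, so the correspondence is well defined at the level of groups. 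Once all six translations are verified, the statement follows directly, and I would note that the antilinearity of conjugation is exactly what converts the reversing (orientation-reversing on $\Fix$) nature of these involutions into the clean complex form required to simplify \eqref{fncomplexa} in the subsequent normal-form computation.
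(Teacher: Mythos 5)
Your overall strategy (translate $R_0$ and the chosen representatives $S_j\in\Xi_j$ into the complex coordinates \eqref{cc} and match them with the listed maps) is exactly the paper's: the paper gives no written proof beyond asserting that the lemma follows ``immediately'' from writing the involutions in complex coordinates, and your block-by-block dictionary (coordinate swap $\leftrightarrow$ a factor $\pm i$ times conjugation, sign flip $\leftrightarrow$ conjugation) is correct. In fact the match is tighter than you claim: for every $j=1,\ldots,6$ the printed $\varphi_j$ equals \emph{exactly} the complex form of one of the two matrices of $\Xi_j$ (e.g.\ the second element of $\Xi_1$ is $(i\overline{z_1},\overline{z_2})=\varphi_1$; the first element of $\Xi_2$ is $(-\overline{z_1},i\overline{z_2})=\varphi_2$, since $-\overline{iz_2}=i\overline{z_2}$; and so on), so Lemma \ref{si} disposes of the representative ambiguity and no sign slack is needed for the $\varphi_j$ at all.

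The genuine gap is your treatment of $\varphi_0$. You compute, correctly, that $R_0$ becomes plain conjugation $C(z_1,z_2)=(\overline{z_1},\overline{z_2})$, which is \emph{not} the printed $\varphi_0=-C$, and you then declare the sign ``harmless'' because only the generated group matters, proposing to verify merely that $\langle\varphi_0,\varphi_j\rangle$ has order $8$ and the dihedral relations. That verification would succeed, but it proves the wrong thing: abstract isomorphism with $\mathbb{D}_4$ is not the correspondence the lemma is used for. Its role in Section \ref{ultimasecao} is to transfer the conditions $R_0X^N(x)=-X^N(R_0x)$ and $S_jX^N(x)=-X^N(S_jx)$ into complex coordinates, and for that the two groups must coincide as transformation groups under \eqref{cc}. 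With the minus sign they do not: $\langle -C,\varphi_1\rangle$ contains the rotation $(z_1,z_2)\mapsto(iz_1,-z_2)$, while $\langle C,\varphi_1\rangle=\langle R_0,S_1\rangle$ contains $(z_1,z_2)\mapsto(-iz_1,z_2)$; these elements have spectra $\{\pm i,-1,-1\}$ and $\{\pm i,1,1\}$ respectively, so the two groups are not even conjugate in $GL_4(\mathbb{R})$, let alone equal. The difference is not cosmetic: reversibility with respect to $C$ forces $\Re(b)=0$ on the coefficient of $\overline{z_1}^{\,q-1}z_2^{\,p}\,\partial/\partial z_1$ for \emph{all} $p,q$, whereas reversibility with respect to $-C$ forces $\Re(b)=0$ or $\Im(b)=0$ according to the parity of $p+q$ --- precisely the $\varphi_0$ rows of the table in Lemma \ref{viva0}. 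The resolution your own computation points to is that $\varphi_0$ should be $+(\overline{z_1},\overline{z_2})$; with that correction, each equality $\langle\varphi_0,\varphi_j\rangle=\langle R_0,S_j\rangle$ holds literally, by your exact matches together with Lemma \ref{si}. As written, your argument certifies the printed statement by a check that does not (and cannot) establish the needed correspondence; it only escapes notice downstream because under the hypotheses of Theorem \ref{UT}, where $p,q$ are odd and hence $p+q$ is even, the constraints imposed by $C$ and $-C$ happen to coincide.
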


To compute a $\dd$-reversible normal form for a vector field, one has first to define which of the groups in Lemma \ref{complexgroups} can be used to do the calculations. Now we establish a normal form of a $\dd$-reversible and $p:q$-resonant vector field $X$, depending only on $p,q$ and not on the involutions generating $\dd$:

\begin{teorema}\label{UT}Let $p,q$ be odd numbers with $pq>1$ and $X\in\mathfrak{X}_0^{(p,q)}(\rn{4})$ be a $\dd$-reversible vector field. Then $X$ is formally conjugated to the following system:
\begin{equation}\label{tnf}\left\{
\begin{array}{ccc}
\dot{x_1}&=&-px_{2}-x_2\sum_{i+j=1}^\infty a_{ij}\Delta_1^i\Delta_2^j\\
\dot{x_2}&=&px_{1}+x_1\sum_{i+j=1}^\infty a_{ij}\Delta_1^i\Delta_2^j\\
\dot{y_1}&=&-qy_{2}-y_2\sum_{i+j=1}^\infty b_{ij}\Delta_1^i\Delta_2^j\\
\dot{y_2}&=&qy_{1}+y_1\sum_{i+j=1}^\infty b_{ij}\Delta_1^i\Delta_2^j,
\end{array}
\right.
\end{equation}
\noindent with $a_{ij},b_{ij}\in\rn{}$ depending on $j^kX(0)$, for $k=i+j$.
\end{teorema}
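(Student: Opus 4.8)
The plan is to realize the target system \eqref{tnf} as the image of the general complex Belitskii normal form \eqref{fncomplexa} under the full set of $\dd$-reversibility constraints, and then to pass back to real coordinates. First I would rewrite \eqref{tnf} in the complex coordinates \eqref{cc}: a direct computation collapses the four real equations to $\dot z_1 = iz_1\left(p + \sum_{i+j\geq 1} a_{ij}\Delta_1^i\Delta_2^j\right)$ and $\dot z_2 = iz_2\left(q + \sum_{i+j\geq 1}b_{ij}\Delta_1^i\Delta_2^j\right)$ with $a_{ij},b_{ij}\in\mathbb{R}$. Comparing with \eqref{fncomplexa}, proving the theorem reduces to three claims about the normal-form data $f_1,f_2,g_1,g_2$: (i) the off-diagonal terms are eliminated, $f_2\equiv g_2\equiv 0$; (ii) $f_1,g_1$ depend only on $\Delta_1,\Delta_2$, so no genuine $\Delta_3,\Delta_4$ dependence survives; and (iii) the surviving coefficients are purely imaginary, so that $f_1 = i\sum a_{ij}\Delta_1^i\Delta_2^j$ and $g_1 = i\sum b_{ij}\Delta_1^i\Delta_2^j$ with real $a_{ij},b_{ij}$.

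By Theorem \ref{d4d4} and the discussion preceding it, it suffices to produce a normal form reversible with respect to $R_0$ and each $S_j$, equivalently, by Lemma \ref{complexgroups}, with respect to $\varphi_0$ and all $\varphi_1,\dots,\varphi_6$. The mechanism I would use is to impose each relation $D\varphi_k(z)\,X^N(z) = -X^N(\varphi_k(z))$ on \eqref{fncomplexa}. What makes this tractable is that every $\varphi_k$ is conjugate-linear and monomial, sending $z_1\mapsto \zeta_1\bar z_1$, $z_2\mapsto\zeta_2\bar z_2$ with $\zeta_1,\zeta_2\in\{\pm 1,\pm i\}$, so it acts on each invariant $\Delta_r$ and on each resonant monomial by an explicit root of unity. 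Writing $f_1 = \sum c_{klmn}\Delta_1^k\Delta_2^l\Delta_3^m\Delta_4^n$ (and likewise for the others) and matching coefficients, each involution becomes a relation $\bar c_{klmn} = \omega\, c_{klmn}$ with $\omega$ a root of unity depending on $k,l,m,n,p,q$; such a relation either pins the argument of $c_{klmn}$ or forces $c_{klmn}=0$.

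The arithmetic then splits cleanly. The reversing generator $\varphi_0(z) = -(\bar z_1,\bar z_2)$ yields $\bar c_{klmn} = -c_{klmn}$, i.e. all coefficients are purely imaginary, which is claim (iii); here one uses that $p+q$ is even (both $p,q$ odd) so that $\varphi_0$ merely interchanges $\Delta_3\leftrightarrow\Delta_4$ without an extra sign. For (i) and (ii) I would exploit the equivariant part: the products $\varphi_0\varphi_j$ are diagonal rotations, and among them one finds $(z_1,z_2)\mapsto(iz_1,z_2)$ and $(z_1,z_2)\mapsto(z_1,iz_2)$, so $X^N$ must be equivariant under independent multiplication of $z_1$ and $z_2$ by $i$. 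Imposing these forces, for every surviving monomial, the exponents of $\Delta_3$ and $\Delta_4$ to match (using that $p,q$ are odd, hence invertible modulo $4$), so that $\Delta_3,\Delta_4$ enter only through $\Delta_3\Delta_4 = \Delta_1^q\Delta_2^p$, which is (ii); simultaneously the off-diagonal factors $\bar z_1^{q-1}z_2^p$ and $z_1^q\bar z_2^{p-1}$ pick up an unmatched phase incompatible with this equivariance, which drives (i). With (i)–(iii) established, substituting $f_1 = i\sum a_{ij}\Delta_1^i\Delta_2^j$ and $g_1 = i\sum b_{ij}\Delta_1^i\Delta_2^j$ back through \eqref{cc} reproduces \eqref{tnf} line by line, and since normalization proceeds order by order the coefficients $a_{ij},b_{ij}$ are read off from $j^kX(0)$, $k=i+j$.

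The step I expect to be the main obstacle is precisely the exponent bookkeeping in (i)–(ii): one must verify that the several root-of-unity constraints coming from the different $\varphi_j$ are jointly satisfiable only in the matched-exponent case, and this is exactly where the hypothesis that both $p$ and $q$ are odd is indispensable, since an even parity would alter the signs and the residues modulo $4$ and could let extra resonant terms persist. Given the combinatorial nature of this verification and the number of monomials and involutions involved, I would organize it as a finite check on the phase exponents modulo $4$, in the same symbolic spirit already used for Lemma \ref{solmonstro} and Lemma \ref{si}, rather than carrying it out entirely by hand.
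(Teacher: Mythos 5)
Your overall frame is sound and matches the paper's: pass to complex coordinates, reduce the theorem to the three claims (i)--(iii), and extract, for each resonant monomial, a coefficient relation $\overline{c}=\omega c$ with $\omega$ a root of unity determined by $p,q$ and the involution. Claim (iii), obtained from $\varphi_0$ alone, is exactly the paper's Lemma \ref{mataocasogend4}, and your computation of the action of each $\varphi_k$ on monomials is the same mechanism used in the proof of Lemma \ref{viva0}.

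The gap is in the mechanism you use for claims (i) and (ii). You assert that among the products $\varphi_0\varphi_j$ one finds both $(z_1,z_2)\mapsto(iz_1,z_2)$ and $(z_1,z_2)\mapsto(z_1,iz_2)$, so that $X^N$ is equivariant under independent multiplication of $z_1$ and $z_2$ by $i$. This is false for every fixed $j$, and the hypothesis of the theorem only gives you one fixed $j$: by Theorem \ref{d4d4}, a $\mathbb{D}_4$-reversible vector field is reversible with respect to a single representation $\langle\varphi_0,\varphi_j\rangle$ for some $j\in\{1,\ldots,6\}$, not with respect to all $\varphi_1,\ldots,\varphi_6$ simultaneously. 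Inside one copy of $\mathbb{D}_4$ (order $8$) the equivariant subgroup is cyclic of order $4$, generated by the single rotation $\varphi_0\varphi_j$; the two independent rotations you invoke generate a group of order $16$, isomorphic to $\mathbb{Z}_4\times\mathbb{Z}_4$, which cannot embed in $\mathbb{D}_4$. Concretely, $\varphi_0\varphi_1=(iz_1,-z_2)$, $\varphi_0\varphi_2=(z_1,iz_2)$, $\varphi_0\varphi_4=(iz_1,iz_2)$, $\varphi_0\varphi_5=(iz_1,z_2)$, $\varphi_0\varphi_6=(iz_1,-iz_2)$: only $j=5$ yields $(iz_1,z_2)$ and only $j=2$ yields $(z_1,iz_2)$, never both. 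Consequently your phase-matching argument collapses, and the elimination of the off-diagonal monomials $\overline{z}_1^{q-1}z_2^{p}\partial_{z_1}$, $z_1^{q}\overline{z}_2^{p-1}\partial_{z_2}$ and of the $\Delta_3,\Delta_4$-dependence cannot be done uniformly in $j$: it must be done for each $j$ separately, by combining the relation $\overline{b}=\omega_0 b$ coming from $\varphi_0$ with the relation $\overline{b}=\omega_j b$ coming from $\varphi_j$ and checking that the two together force $b=0$. That is precisely the residue-modulo-$4$ case analysis of the paper's Lemma \ref{viva0}, Corollary \ref{viva} and Propositions \ref{maior} and \ref{sucesso}, and it is genuinely delicate: for instance for $j=4$ (rotation $(iz_1,iz_2)$) and $p+q\equiv 2\pmod 4$, both reversors impose the very same condition $\Re(b)=0$, so nothing in that pair of constraints kills $b$ --- the conclusion depends on the interplay between $j$ and the residues of $p,q$ in a way your argument never confronts.
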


\begin{obs}\label{remark10}The hypothesis on $p,q$ given in Theorem \ref{UT} can be relaxed. In fact, if $p,q$ satisfies the following conditions
\[\left\{
\begin{array}{l}
q\equiv_4 1 \ \textrm{or} \ q\equiv_4 3 \ \textrm{or} \ (q\equiv_4 0 \ \textrm{and} \ p+q=2k+1) \ \textrm{or} \ (q\equiv_4 2 \ \textrm{and} \ p+q=2k)\\
p\equiv_4 1 \ \textrm{or} \ p\equiv_4 2 \ \textrm{or} \ p\equiv_4 3\\
p\equiv_4 1 \ \textrm{or} \ p\equiv_4 3 \ \textrm{or} \ (p\equiv_4 0 \ \textrm{and} \ q=2k+1) \ \textrm{or} \ (p\equiv_4 2 \ \textrm{and} \ q=2k)
\end{array}
\right.
\]
\noindent then the conclusions of Theorem \ref{UT} are also valid (see \cite{martins}).
\end{obs}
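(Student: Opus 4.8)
The plan is to start from the complex Belitskii normal form \eqref{fncomplexa}, which already encodes the full resonance structure of the linear part $A(p,q)$, and then impose the six $\dd$-reversibility conditions coming from Lemma \ref{complexgroups}. Since Theorem \ref{d4d4} guarantees that every $\dd$-reversible vector field is $(R_0,S_j)$-reversible for each $j$ in some coordinate system, and since a normal form independent of the involutions must be reversible under \emph{all} the generators simultaneously, the strategy is to require that \eqref{fncomplexa} be invariant under each $\langle\varphi_0,\varphi_j\rangle$ at once. First I would compute the action of each $\varphi_j$ on the monomials appearing in \eqref{fncomplexa}, paying attention to how complex conjugation and the factors of $i$ permute and rescale $z_1,\overline{z_1},z_2,\overline{z_2}$, and hence act on the invariants $\Delta_1,\ldots,\Delta_4$.

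The key observation I expect to exploit is that $\Delta_1=z_1\overline{z_1}$ and $\Delta_2=z_2\overline{z_2}$ are real and invariant under all the $\varphi_j$ (they are just $x_1^2+x_2^2$ and $y_1^2+y_2^2$), whereas the mixed invariants $\Delta_3=z_1^q\overline{z_2}^{\,p}$ and $\Delta_4=\overline{\Delta_3}$ acquire phase factors under the $\varphi_j$. The heart of the argument is to show that when $p,q$ are odd (or more generally satisfy the congruence conditions of Remark \ref{remark10}), these phase factors are nontrivial for at least one generator, forcing the coefficients of any term containing a positive power of $\Delta_3$ or $\Delta_4$ to vanish. This collapses the arbitrary functions $f_1,f_2,g_1,g_2$ in \eqref{fncomplexa}: the resonant off-diagonal terms carrying $\overline{z_1}^{\,q-1}z_2^p$ and $z_1^q\overline{z_2}^{\,p-1}$ are killed, and $f_1,g_1$ are forced to depend only on $\Delta_1,\Delta_2$ and to be purely imaginary. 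Writing $f_1=i\sum a_{ij}\Delta_1^i\Delta_2^j$ and $g_1=i\sum b_{ij}\Delta_1^i\Delta_2^j$ with real $a_{ij},b_{ij}$ then yields exactly the decoupled system \eqref{tnf} after translating back to real coordinates via \eqref{cc}.

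The main obstacle will be the careful bookkeeping of the parity and modular conditions that decide when a phase factor is genuinely nontrivial. The factors of $i$ in the $\varphi_j$ mean that the relevant obstruction lives modulo $4$ rather than modulo $2$, which is precisely why the clean hypothesis ``$p,q$ odd'' appears in Theorem \ref{UT} and why the relaxed version in Remark \ref{remark10} splits into cases according to $p,q \bmod 4$. I would organize this step as follows: for a monomial $\Delta_3^a\Delta_4^b$ (times the surviving $z$-prefactor) I compute its multiplier under $\varphi_j$ as a power of $i$ depending linearly on $a,b,p,q$, and then verify that the system of ``multiplier $=1$'' conditions over $j=1,\ldots,6$ has only $a=b=0$ as a solution under the stated hypotheses. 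One must also confirm that the diagonal reality condition coming from $\varphi_0$ (which acts as $-$ complex conjugation) forces $f_1,g_1$ to be imaginary-valued functions of the real invariants, so that the radial parts cancel and only the angular (rotational) terms survive.

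Finally, I would check formal conjugacy rather than merely formal equality: the normal form \eqref{tnf} is obtained after a formal change of coordinates tangent to the identity, so the statement is that $X$ is \emph{formally conjugated} to \eqref{tnf}, consistent with the Belitskii normal form procedure underlying \eqref{fncomplexa}. The dependence of $a_{ij},b_{ij}$ on $j^kX(0)$ for $k=i+j$ follows automatically from the order-by-order nature of the normal form algorithm, since each homogeneous component of the transformation is determined by the jets of $X$ up to the corresponding degree.
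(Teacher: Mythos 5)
There is a genuine gap in the logical structure of your argument: you have reversed a quantifier. Theorem \ref{d4d4} states that the set of $\dd$-reversible fields is the \emph{union} $\Omega_1\cup\ldots\cup\Omega_6$, i.e.\ a given vector field is $(R_0,S_j)$-reversible for \emph{some} $j$, not ``for each $j$ in some coordinate system'' as you assert. Consequently your strategy --- requiring \eqref{fncomplexa} to be invariant under each $\langle\varphi_0,\varphi_j\rangle$ ``at once'', and killing a monomial as soon as its phase factor is nontrivial ``for at least one generator'' --- imposes all six reversibility constraints simultaneously on a single vector field. That proves the conclusion only for the much smaller class of fields reversible with respect to the whole collection $\varphi_1,\ldots,\varphi_6$, and says nothing about a general $\dd$-reversible field, which carries only the two constraints coming from $\varphi_0$ and one single $\varphi_j$. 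The error also shows up quantitatively: with six involutions available, almost any pair of rows of the tables in Lemma \ref{viva0} (for instance $\Re(b)=\Im(b)$ from $\varphi_3$ with $p\equiv_4 1$ together with $\Re(b)=-\Im(b)$ from $\varphi_1$ with $q\equiv_4 1$) already forces $b=0$, so your scheme would yield hypotheses far weaker than the three lines of congruences you are asked to justify, and could not explain why precisely those conditions appear.

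The correct structure, which is the paper's, is case-by-case in $j$: for every fixed $j\in\{1,\ldots,6\}$ one must show that the \emph{two} conditions on each resonant coefficient coming from $\varphi_0$-reversibility and from $\varphi_j$-reversibility alone (the relevant rows of Lemma \ref{viva0}, and their analogues behind Propositions \ref{maior} and \ref{sucesso}) already force that coefficient to vanish; then the normal form \eqref{tnf} is obtained no matter which class $\Omega_j$ the field falls into. The three lines of congruence conditions in the remark are exactly what makes this two-constraint argument succeed for \emph{every} $j$: the first line is the hypothesis of Corollary \ref{viva}, which settles $j=1$ via the $\varphi_1$ rows, while the other two lines are what the analogous computations require for the generators whose phases involve $p$ and $p+q$ modulo $4$ (such as $\varphi_2$ and $\varphi_3$). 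Your computational core --- the phases of $\Delta_3,\Delta_4$ living modulo $4$, the purely imaginary diagonal coefficients forced by $\varphi_0$, and the passage back to the real form \eqref{tnf} --- is sound and matches the paper; but it must be run separately inside each group $\langle\varphi_0,\varphi_j\rangle$, not with all groups pooled together.
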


\begin{obs}The normal form \eqref{tnf} coincides (in the nonlinear terms) with the normal form of a reversible vector field $X\in\mathfrak{X}_0^{(\alpha,\beta)}(\rn{4})$ with $\alpha\beta^{-1}\notin\mathbb{Q}$. Remember that this fact allowed us to discard the case $\alpha\beta^{-1}\notin\mathbb{Q}$ in page \pageref{opcoes}.
\end{obs}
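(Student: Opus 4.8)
The plan is to derive the normal form of an $R_0$-reversible field with $\alpha\beta^{-1}\notin\mathbb{Q}$ directly and read off that its nonlinear part has exactly the shape appearing in \eqref{tnf}. By Theorem \ref{thm1} there is no loss in assuming the reversing involution is $R_0$, so I only need the Belitskii (equivalently Poincar\'e--Dulac) normal form of a field $X\in\mathfrak{X}_0^{(\alpha,\beta)}(\rn{4})$ subject to the single constraint $R_0X=-X\circ R_0$, and then compare it term by term with \eqref{tnf}.

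First I would settle the resonances. The eigenvalues of $A(\alpha,\beta)$ are $\pm i\alpha,\pm i\beta$, so in the complex coordinates \eqref{cc} a monomial $z_1^{m_1}\overline{z}_1^{n_1}z_2^{m_2}\overline{z}_2^{n_2}$ is resonant in the $\dot z_1$ equation iff $\alpha(m_1-n_1-1)+\beta(m_2-n_2)=0$. Since $\alpha\beta^{-1}\notin\mathbb{Q}$ this forces $m_1=n_1+1$ and $m_2=n_2$, i.e.\ the monomial equals $z_1\Delta_1^{\,i}\Delta_2^{\,j}$; the analogous computation for $\dot z_2$ yields $z_2\Delta_1^{\,i}\Delta_2^{\,j}$. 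Hence every admissible normal form is
\begin{equation*}
\dot z_1=i\alpha z_1+z_1\,P(\Delta_1,\Delta_2),\qquad \dot z_2=i\beta z_2+z_2\,Q(\Delta_1,\Delta_2),
\end{equation*}
with $P,Q$ having no constant term. The decisive point already appears here: the resonant monomials $\Delta_3=z_1^{q}\overline{z}_2^{p}$ and $\Delta_4=\overline{\Delta}_3$ that enter \eqref{fncomplexa} demand the relation $q\alpha=p\beta$, which is impossible under $\alpha\beta^{-1}\notin\mathbb{Q}$; so the cross terms $\overline z_1^{q-1}z_2^{p}f_2$ and $z_1^{q}\overline z_2^{p-1}g_2$ are absent \emph{a priori}, precisely the terms that the full $\dd$-reversibility had to eliminate in Theorem \ref{UT}.

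Next I would impose reversibility. Writing $R_0$ in the coordinates \eqref{cc} (it acts as the involution $\varphi_0$ of Lemma \ref{complexgroups}) and using that $\Delta_1,\Delta_2$ are real and $R_0$-invariant, the condition $R_0X=-X\circ R_0$ reduces on the $\dot z_1$ component to $\overline P=-P$, and likewise $\overline Q=-Q$; that is, $P$ and $Q$ are purely imaginary. Setting $P=i\sum_{i+j\ge1}a_{ij}\Delta_1^{\,i}\Delta_2^{\,j}$ and $Q=i\sum_{i+j\ge1}b_{ij}\Delta_1^{\,i}\Delta_2^{\,j}$ with real $a_{ij},b_{ij}$ and returning to real coordinates through \eqref{cc}, the first pair becomes $\dot x_1=-x_2(\alpha+\sum a_{ij}\Delta_1^{\,i}\Delta_2^{\,j})$, $\dot x_2=x_1(\alpha+\sum a_{ij}\Delta_1^{\,i}\Delta_2^{\,j})$, and similarly for $(y_1,y_2)$ with the $b_{ij}$. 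This is exactly system \eqref{tnf} (with $p,q$ replaced by $\alpha,\beta$ in the linear part), so the nonlinear terms coincide, which is the assertion.

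I expect the main obstacle to be the bookkeeping in the reversibility reduction rather than any conceptual difficulty: one must verify that $R_0$-reversibility, since $R_0$ and the normal form are both block-diagonal in $(z_1,z_2)$, imposes no relation \emph{between} the two blocks and no parity constraint beyond $\overline P=-P$, $\overline Q=-Q$, so that the coefficient families $\{a_{ij}\}$ and $\{b_{ij}\}$ remain free and independent, matching \eqref{tnf} coefficient for coefficient. Some care is also needed to phrase the comparison at the level of the truncation to each finite order, since both sides are formal series; but the resonance analysis above is order-independent, so the identification propagates to every degree.
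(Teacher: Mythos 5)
Your proposal is correct and follows exactly the route the paper leaves implicit: the irrationality of $\alpha\beta^{-1}$ kills the resonances $\Delta_3,\Delta_4$ (so the cross terms of \eqref{fncomplexa} never arise), and the surviving monomials $z_j\Delta_1^m\Delta_2^n$ acquire purely imaginary coefficients by the same computation the paper records in Lemma \ref{mataocasogend4}, which upon passing to real coordinates gives precisely the nonlinear part of \eqref{tnf}. The only cosmetic slip is identifying $R_0$ with $\varphi_0$ (the paper's $\varphi_0(z_1,z_2)=-(\overline{z_1},\overline{z_2})$ is the complexification of $-R_0$, while $R_0$ itself is plain conjugation), but since $\Delta_1,\Delta_2$ are invariant under both maps, either choice yields the same constraint $\overline{P}=-P$, $\overline{Q}=-Q$, so the argument stands.
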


The proof of Theorem \ref{UT} (even with the hypothesis of Remark \ref{remark10}) is based on a sequence of lemmas. The idea is just to show that with some hypothesis on $p$ and $q$, all the coefficients of $\Delta_3$ and $\Delta_4$ in the reversible-equivariant analogous of \eqref{fncomplexa} must zero.

First let us focus on the monomials that are never killed by the reversible-equivariant structure. 

\begin{lema}\label{mataocasogend4}Let $v=az_j\Delta_1^m\Delta_2^n\frac{\partial}{\partial z_j}$, $a\in\mathbb{C}$. So, for any $j\in\{1,\ldots,6\}$, the $\varphi_j$-reversibility implies $\overline{a}=-a$ (or $\Re(a)=0$). In particular, these terms are always present (generically) in the normal form.
\end{lema}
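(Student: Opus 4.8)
The plan is to check directly that a monomial vector field of the form $v=az_j\Delta_1^m\Delta_2^n\frac{\partial}{\partial z_j}$ (for a fixed $j\in\{1,2\}$) is unaffected, in its real part, by each reversibility condition $\varphi_j$-reversibility, $j=1,\ldots,6$. First I would recall that each involution $\varphi_j$ acts on $(z_1,z_2)$ by a combination of complex conjugation, multiplication by a fourth root of unity, and an overall sign, as listed in Lemma \ref{complexgroups}. The reversibility condition $D\varphi_j(z)v(z)=-v(\varphi_j(z))$ must be translated into a relation on the coefficient $a$, and the key observation is that the quantities $\Delta_1=z_1\overline{z_1}$ and $\Delta_2=z_2\overline{z_2}$ are real and invariant (up to permutation of indices) under every $\varphi_j$, since conjugation and multiplication by a unit-modulus constant both preserve $|z_1|^2$ and $|z_2|^2$.

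Next I would carry out the substitution explicitly for the generic shape of $\varphi_j$. Write $\varphi_j(z_1,z_2)=\epsilon(\omega_1\overline{z_{\sigma(1)}},\omega_2\overline{z_{\sigma(2)}})$, where $\epsilon\in\{-1,1\}$, each $\omega_k$ is a fourth root of unity, and $\sigma$ is a possible swap of the two complex coordinates. Because $v$ is diagonal in the index $j$ (it has only a $\partial/\partial z_j$ component) and because $\Delta_1,\Delta_2$ are invariant under conjugation and rotation, the monomial $z_j\Delta_1^m\Delta_2^n$ transforms into a scalar multiple of the conjugate monomial $\overline{z_j}\Delta_1^m\Delta_2^n$ (or the index-swapped analogue). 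Tracking how the linear factor $z_j$ picks up a conjugation together with the chain-rule factor coming from $D\varphi_j$, the reversibility identity collapses to the single scalar equation $\overline{a}=-a$. In other words the permutation $\sigma$ and the rotations $\omega_k$ cancel between the two sides of $D\varphi_j(z)v=-v(\varphi_j)$, leaving only the effect of complex conjugation on the scalar $a$, which forces $\Re(a)=0$.

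The main obstacle I anticipate is verifying the cancellation uniformly across all six cases rather than case by case: one must confirm that for every $\varphi_j$ the rotational and swap factors introduced by $D\varphi_j$ on the vector-field side are exactly inverted by their action inside $v(\varphi_j(z))$, so that no extra phase survives to constrain $a$ beyond the conjugation. The cleanest way to handle this is to note that each $\varphi_j$ is an antiholomorphic involution whose linear part is a unitary matrix times conjugation; for such maps the chain-rule factor $D\varphi_j$ and the pointwise evaluation factor are complex conjugates of one another on any diagonal monomial that depends only on the invariants $\Delta_1,\Delta_2$, so their product is a positive real scalar and drops out of the sign condition. I would therefore isolate this structural fact once and apply it to all $j$ simultaneously, reducing the six computations to a single argument whose output is precisely $\overline{a}=-a$. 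Finally, since $\Re(a)=0$ is the only restriction and it is satisfied by a nonzero purely imaginary $a$, these monomials survive in the normal form, which gives the claimed genericity statement.
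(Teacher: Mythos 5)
Your proposal is correct, and its computational core is the same as the paper's: substitute the monomial $v=az_j\Delta_1^m\Delta_2^n\frac{\partial}{\partial z_j}$ into the reversibility identity, use that $\Delta_1,\Delta_2$ are real and invariant under each antiholomorphic involution, and read off $\overline{a}=-a$. The difference is one of scope. The paper's printed proof checks only $\varphi_0$ (it computes $\varphi_0(v(z))=-\overline{a}\,\overline{z_1}\Delta_1^m\Delta_2^n\frac{\partial}{\partial z_1}$ and $v(\varphi_0(z))=-a\,\overline{z_1}\Delta_1^m\Delta_2^n\frac{\partial}{\partial z_1}$, whence $-\overline{a}=a$) and tacitly leaves the other involutions to the reader, whereas you give one uniform argument covering all $\varphi_j$ by writing each as $(z_1,z_2)\mapsto(\mu_1\overline{z_1},\mu_2\overline{z_2})$ with $\mu_k$ a fourth root of unity. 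Since the lemma asserts the conclusion for every $j$, and the genericity claim needs precisely that no $\varphi_j$ imposes more than $\Re(a)=0$, your uniform treatment is arguably the more complete route. Two cautions, neither fatal. First, your ``structural fact'' is misstated: for $\varphi_1(z_1,z_2)=(i\overline{z_1},\overline{z_2})$ the chain-rule factor and the evaluation factor are both equal to $i$, so they are not complex conjugates and their product is $-1$, not a positive real. The correct mechanism --- which your explicit computation earlier in the proposal does track --- is that the \emph{same} unimodular factor $\mu_j$ multiplies both sides of $D\varphi_j(z)v(z)=-v(\varphi_j(z))$ and therefore divides out, leaving $\overline{a}=-a$ with no residual phase. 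Second, the coordinate swap $\sigma$ you allow for never occurs: none of the maps in Lemma \ref{complexgroups} exchanges $z_1$ and $z_2$, and your claim that a swap would also cancel is unjustified (a swapping involution would relate the $\partial/\partial z_1$ and $\partial/\partial z_2$ coefficients of the field rather than constrain a single $a$); fortunately that case is vacuous here, so your argument stands once restricted to the diagonal involutions actually in play.
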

\begin{proof}From
\begin{equation*}
\varphi_0\left(az_1\Delta_1^m\Delta_2^n\frac{\partial}{\partial z_1}\right)=-\overline{a} \ \overline{z_1}\Delta_1^m\Delta_2^n\frac{\partial}{\partial z_1}
\end{equation*}
\noindent and
\begin{equation*}
\left.az_1\Delta_1^m\Delta_2^n\frac{\partial}{\partial z_1}\right|_{(-\overline{z_1},-\overline{z_2})}=-a\overline{z_1}\Delta_1^m\Delta_2^n\frac{\partial}{\partial z_1}
\end{equation*}
\noindent follows that $-\overline{a}=a$.
\end{proof}

Now let us see what happens with the monomials of type $(\overline{z_1})^{q-1}z_2^p\frac{\partial}{\partial z_1}$. We mention that only for such monomials a complete proof will be presented. The other cases are similar. Moreover, we will give the statement and the proof in the direction of Remark \ref{remark10}.

\begin{lema}\label{viva0}Let $v=b\overline{z_1}^{q-1}z_2^p\frac{\partial}{\partial z_1}$, $b\in\mathbb{C}$. So, we establish the following tables:

\[
\begin{array}{lll}
reversibility &\vline \  hypothesis \ on \ p,q &\vline \  conditions \ on \ b \\ \hline
\varphi_0   &\vline \ p+q \ even             &\vline \  \Re(b)=0           \\ \hline
            &\vline \ p+q \ odd              &\vline \  \Im(b)=0           \\ \hline
\varphi_1   &\vline \ q\equiv_4 0            &\vline \  \Re(b)=0           \\ \hline
            &\vline \ q\equiv_4 1            &\vline \  \Re(b)=-\Im(b)     \\ \hline
            &\vline \ q\equiv_4 2            &\vline \  \Im(b)=0           \\ \hline
            &\vline \ q\equiv_4 3            &\vline \  \Re(b)=\Im(b)      \\ \hline
\varphi_2   &\vline \ p\equiv_4 0, \ q \ even&\vline \  \Re(b)=0           \\ \hline
            &\vline \ p\equiv_4 0, \ q \ odd &\vline \  \Im(b)=0           \\ \hline
            &\vline \ q\equiv_4 1, \ q \ even&\vline \  \Re(b)=\Im(b)      \\ \hline
            &\vline \ q\equiv_4 1, \ q \ odd &\vline \  \Re(b)=-\Im(b)     \\ \hline
            &\vline \ q\equiv_4 2, \ q \ even&\vline \  \Im(b)=0           \\ \hline
            &\vline \ q\equiv_4 2, \ q \ odd &\vline \  \Re(b)=0           \\ \hline
            &\vline \ q\equiv_4 3, \ q \ even&\vline \  \Re(b)=-\Im(b)     \\ \hline
            &\vline \ q\equiv_4 3, \ q \ odd &\vline \  \Re(b)= \Im(b)     \\ \hline
\varphi_3   &\vline \ p\equiv_4 0            &\vline \  \Re(b)=0           \\ \hline
            &\vline \ p\equiv_4 1            &\vline \  \Re(b)=\Im(b)      \\ \hline
            &\vline \ p\equiv_4 2            &\vline \  \Im(b)=0           \\ \hline
            &\vline \ p\equiv_4 3            &\vline \  \Re(b)=-\Im(b)      \\ \hline
\end{array}
\]
\[
\begin{array}{lll}
reversibility &\vline \  hypothesis \ on \ p,q &\vline \  conditions \ on \ b \\ \hline
\varphi_4   &\vline \ p+q\equiv_4 0, \ q \ even  &\vline \  \Re(b)=0           \\ \hline
            &\vline \ p+q\equiv_4 0, \ q \ odd   &\vline \  \Im(b)=0           \\ \hline
            &\vline \ p+q\equiv_4 1, \ q \ even  &\vline \  \Re(b)=\Im(b)      \\ \hline
            &\vline \ p+q\equiv_4 1, \ q \ odd  &\vline \  \Re(b)=-\Im(b)     \\ \hline
            &\vline \ p+q\equiv_4 2, \ q \ even &\vline \  \Im(b)=0           \\ \hline
            &\vline \ p+q\equiv_4 2, \ q \ odd   &\vline \  \Re(b)=0           \\ \hline
            &\vline \ p+q\equiv_4 3, \ q \ even  &\vline \  \Re(b)=-\Im(b)     \\ \hline
            &\vline \ p+q\equiv_4 3, \ q \ odd   &\vline \  \Im(b)=\Im(b)      \\ \hline
\varphi_5   &\vline \ q\equiv_4 0, \ p+q \ even  &\vline \  \Re(b)=0           \\ \hline
            &\vline \ q\equiv_4 0, \ p+q \ odd   &\vline \  \Im(b)=0           \\ \hline
            &\vline \ q\equiv_4 1, \ p+q \ even  &\vline \  \Re(b)=\Im(b)      \\ \hline
            &\vline \ q\equiv_4 1, \ p+q \ odd   &\vline \  \Re(b)=-\Im(b)    \\ \hline
            &\vline \ q\equiv_4 2, \ p+q \ even  &\vline \  \Im(b)=0           \\ \hline
            &\vline \ q\equiv_4 2, \ p+q \ odd   &\vline \  \Re(b)=0           \\ \hline
            &\vline \ q\equiv_4 3, \ p+q \ even  &\vline \  \Re(b)=-\Im(b)     \\ \hline
            &\vline \ q\equiv_4 3, \ p+q \ odd   &\vline \  \Re(b)=\Im(b)      \\ \hline
\varphi_6   &\vline \ p+q\equiv_4 0              &\vline \  \Re(b)=0           \\ \hline
            &\vline \ p+q\equiv_4 1              &\vline \  \Re(b)=-\Im(b)      \\ \hline
            &\vline \ p+q\equiv_4 2              &\vline \  \Im(b)=0            \\ \hline
            &\vline \ p+q\equiv_4 3              &\vline \  \Re(b)= \Im(b)      \\ \hline
\end{array}
\]

\end{lema}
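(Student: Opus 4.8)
The plan is to collapse every row of both tables into a single scalar identity relating $b$, $\overline b$, and a fourth root of unity determined by $p,q\pmod 4$, and then to read off the four possible conclusions $\Re(b)=0$, $\Im(b)=0$, $\Re(b)=\Im(b)$, $\Re(b)=-\Im(b)$ according to which root occurs.

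First I would put every involution of Lemma \ref{complexgroups} into the uniform antiholomorphic diagonal form $\varphi_j(z_1,z_2)=(\mu_1\overline{z_1},\mu_2\overline{z_2})$, where $\mu_1,\mu_2$ are fourth roots of unity; reading off Lemma \ref{complexgroups} gives the pairs $(\mu_1,\mu_2)$ equal to $(-1,-1)$, $(i,1)$, $(-1,i)$, $(1,i)$, $(i,i)$, $(i,-1)$, $(i,-i)$ for $j=0,1,\dots,6$ (each is an involution since $\mu_k\overline{\mu_k\overline{z_k}}=|\mu_k|^2z_k=z_k$). The key computation is then to exploit that each $\varphi_j$ is antiholomorphic and diagonal: for a field $X=\sum_k X_k\frac{\partial}{\partial z_k}+\text{c.c.}$ the reversibility relation $D\varphi_j(z)X(z)=-X(\varphi_j(z))$ becomes, coordinate by coordinate, $\mu_k\,\overline{X_k(\mu_1\overline{w_1},\mu_2\overline{w_2})}=-X_k(w)$, the conjugation reflecting the antiholomorphic $\overline{z}$ entering $\varphi_j$.

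Applying this to $v=b\,\overline{z_1}^{\,q-1}z_2^{\,p}\frac{\partial}{\partial z_1}$ (so $X_1=b\,\overline{z_1}^{\,q-1}z_2^{\,p}$ and $X_2=0$), substituting $z_1=\mu_1\overline{w_1}$, $z_2=\mu_2\overline{w_2}$, conjugating, and matching the coefficient of $\overline{w_1}^{\,q-1}w_2^{\,p}$ yields the master equation $\overline b\,\mu_1^{\,q}\,\overline{\mu_2}^{\,p}=-b$. Writing $\mu_1=i^{a}$ and $\mu_2=i^{c}$ with $a,c\in\{0,1,2,3\}$, one has $\mu_1^{\,q}\overline{\mu_2}^{\,p}=i^{m}$ with $m\equiv aq-cp\pmod 4$, so the master equation is $i^{m}\overline b=-b$. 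With $b=\Re(b)+i\,\Im(b)$ this gives $\Re(b)=0$, $\Re(b)=-\Im(b)$, $\Im(b)=0$, $\Re(b)=\Im(b)$ according as $m\equiv 0,1,2,3\pmod 4$. Finally I would substitute the seven pairs $(a,c)$, obtaining $m\equiv 2(q-p),\,q,\,2q-p,\,-p,\,q-p,\,q-2p,\,p+q\pmod 4$ for $j=0,\dots,6$, and split each into the residue/parity cases recorded in the statement; reducing modulo $4$ then reproduces every entry of both tables.

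The one delicate step is the second: correctly deriving the reversibility condition for an antiholomorphic map, i.e. tracking the conjugation that $\varphi_j$ forces (this is what sends $b\mapsto\overline b$ and, via the prefactor $\mu_k$, promotes $\mu_1^{\,q-1}$ to $\mu_1^{\,q}$). Once the master equation $\overline b\,\mu_1^{\,q}\,\overline{\mu_2}^{\,p}=-b$ is in hand, the remaining work is pure bookkeeping in $\mathbb{Z}/4\mathbb{Z}$, of which each table is a transcription; the other monomials announced in the surrounding text are handled by the identical computation with the roles of the indices and coordinates permuted.
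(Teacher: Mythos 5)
Your proposal is correct and is essentially the paper's own argument: for the representative case $\varphi_2$ the paper derives exactly your master equation specialized to $(\mu_1,\mu_2)=(-1,i)$, namely $\overline{b}=(-1)^{q-1}i^{p}b$ (equivalently $i^{2q-p}\,\overline{b}=-b$), and then carries out the same mod-$4$ bookkeeping, declaring the other six involutions ``similar''. Your uniform equation $\overline{b}\,\mu_1^{q}\,\overline{\mu_2}^{\,p}=-b$ is the clean simultaneous version of that computation, with the antiholomorphic pushforward tracked correctly, and it reproduces every table entry --- indeed it also pinpoints the paper's evident typos (the hypotheses printed ``$q\equiv_4\cdots$'' in the $\varphi_2$ block that should read ``$p\equiv_4\cdots$'', and the vacuous ``$\Im(b)=\Im(b)$'' in the $\varphi_4$ row $p+q\equiv_4 3$, $q$ odd, which should be $\Re(b)=\Im(b)$).
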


\begin{proof}Let us give the proof for $\varphi_2$-reversibility. The proof of any other case is similar. Note that
\begin{equation*}\left\{
\begin{array}{lcl}
\varphi_2(v(z_1,z_2))&=&-\overline{b}z_1^{q-1}\overline{z_2}^p\frac{\partial}{\partial z_1}\\
v(\varphi_2(z_1,z_2))&=&b(-1)^{q-1}i^pz_1^{q-1}\overline{z_2}^p
\end{array}
\right.
\end{equation*}
Then, from $\varphi_2(v(z))=-v(\varphi_2(z))$ we have $\overline{b}=(-1)^{q-1}i^pb$. Now we apply the hypotheses on $p,q$ and the proof follows in a straightforward way.
\end{proof}

Next corollary is the first of a sequence of results establishing that some monomial does not appear in the normal form:

\begin{corolario}\label{viva}Let $X\in\mathfrak{X}_0^{(p,q)}(\rn{4})$ be a $\langle\varphi_0,\varphi_j\rangle$-reversible vector field. Then if
\begin{itemize}
\item $q\equiv 1 \ mod \ 4$ or
\item $q\equiv 3 \ mod \ 4$ or 
\item $q\equiv 0 \ mod \ 4$ and $p+q$ odd or
\item $q\equiv 2 \ mod \ 4$ and $p+q$ even,
\end{itemize}
then the normal form of $X$ does not contain monomials of the form \begin{equation}\label{queira}a_1\overline{z_1}^{nq-1}z_2^np\frac{\partial}{\partial z_1}, \ \ a_2z_1^{mq}\overline{z}_2^{mp-1}\frac{\partial}{\partial z_2}, \ a_1,a_2\in\mathbb{C}.\end{equation}
\end{corolario}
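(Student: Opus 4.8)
The plan is to reduce the statement to the single--monomial computation already performed in Lemma~\ref{viva0}, and then to combine the two reversibility constraints coming from $\varphi_0$ and $\varphi_j$. First I would observe that, by the description \eqref{fncomplexa}, every resonant monomial in the $\partial/\partial z_1$--component has the shape $b\,\overline{z_1}^{\,q-1}z_2^{\,p}\,\Delta_1^{i}\Delta_2^{j}\Delta_3^{a}\Delta_4^{c}\,\partial/\partial z_1$. The monomials singled out in \eqref{queira}, namely $\overline{z_1}^{\,nq-1}z_2^{\,np}\,\partial/\partial z_1$, are exactly the ones with $a=0$ and $c=n-1$, possibly multiplied by an inert factor $\Delta_1^{i}\Delta_2^{j}$. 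Since $\Delta_1=z_1\overline{z_1}$ and $\Delta_2=z_2\overline{z_2}$ are real and fixed by every $\varphi_k$, this factor does not alter the reversibility relation satisfied by the coefficient $b$. Hence it suffices to decide, for each $n\ge 1$, whether the coefficient of the bare monomial $\overline{z_1}^{\,nq-1}z_2^{\,np}\,\partial/\partial z_1$ must vanish; this is precisely the framework of Lemma~\ref{viva0} with the exponents $q-1,\,p$ replaced by $nq-1,\,np$.

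Next I would read off the two constraints. The $\varphi_0$--row of Lemma~\ref{viva0} forces $\Re(b)=0$ or $\Im(b)=0$ according to the parity of $n(p+q)$, while the $\varphi_j$--row forces one of $\Re(b)=0$, $\Im(b)=0$, $\Re(b)=\Im(b)$, $\Re(b)=-\Im(b)$ according to the residue of $nq$ modulo $4$. Viewing $b\in\mathbb{C}\cong\rn{2}$, each constraint is a line through the origin, and the monomial survives if and only if the two lines coincide; it is annihilated precisely when they are transverse, in which case $b=0$.

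The core of the argument is then a case analysis matching the four hypotheses with transversality. For example, when $q\equiv_4 1$ the $\varphi_j$--constraint is the diagonal $\Re(b)=-\Im(b)$, which meets either coordinate axis only at the origin, so $b=0$ whichever $\varphi_0$--condition holds; the cases $q\equiv_4 3$, $(q\equiv_4 0,\ p+q \text{ odd})$ and $(q\equiv_4 2,\ p+q \text{ even})$ are dispatched identically by checking that the $\varphi_0$-- and $\varphi_j$--lines are never parallel. The second family in \eqref{queira}, the monomials $z_1^{\,mq}\overline{z_2}^{\,mp-1}\,\partial/\partial z_2$, is handled in the same manner after interchanging the roles of the two oscillators $(z_1,p)\leftrightarrow(z_2,q)$ and invoking the $\partial/\partial z_2$ analogue of Lemma~\ref{viva0}.

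The step I expect to be delicate is the uniformity in $n$ (and $m$): the tabulated conditions depend on the \emph{effective} exponents $nq$ and $n(p+q)$ rather than on $q$ and $p+q$ themselves, so the line produced by $\varphi_j$ can shift as $n$ runs through the $\Delta_4$--tower. The real content of the proof is therefore the bookkeeping over residues modulo $4$ needed to guarantee that each hypothesis keeps the $\varphi_0$-- and $\varphi_j$--lines transverse across the admissible range of $n$; one must be careful about the exponents $n$ for which $nq$ becomes divisible by $4$, since there the two conditions may align and the transversality argument degenerates. Controlling this family of cases uniformly is the main obstacle, and it is what ultimately forces the whole tower of monomials in \eqref{queira} to be absent.
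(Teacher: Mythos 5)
You follow the same route as the paper: reduce each monomial in \eqref{queira} to the coefficient computation of Lemma \ref{viva0}, read off the constraints imposed by $\varphi_0$- and $\varphi_j$-reversibility as two lines through the origin in the $b$-plane, and conclude $b=0$ when the two lines are transverse. For $n=m=1$ this is exactly the paper's (one-line) proof, and your case analysis at that level is correct.

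The gap is precisely the step you flag and then assert away: uniformity in $n$. You are right that the constraints depend on the effective exponents; for $j=1$ they read $\overline{b}=(-1)^{n(p+q)-1}b$ (from $\varphi_0$) and $\overline{b}=-(-i)^{nq}b$ (from $\varphi_1$), so the $\varphi_j$-line rotates as $n$ runs through the $\Delta_4$-tower. But your claim that the residue bookkeeping ``ultimately forces the whole tower of monomials in \eqref{queira} to be absent'' is never carried out, and it fails: whenever $nq\equiv 0\pmod 4$ (e.g.\ $n=4$ with $q$ odd, which is allowed under the first two hypotheses, or already $n=2$ when $q\equiv_4 0$), both constraints degenerate to $\overline{b}=-b$, i.e.\ the two lines coincide on $\Re(b)=0$, and any purely imaginary coefficient satisfies both reversibility conditions simultaneously. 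Since the monomial $\overline{z_1}^{nq-1}z_2^{np}\partial/\partial z_1$ is resonant ($-(nq-1)p+npq=p$), it lies in the reversible-equivariant Belitskii normal form space, so no transversality argument of this kind can exclude it; the analogous degeneracy hits the second family in \eqref{queira} at the levels $mq\equiv 0\pmod 4$. Thus your proposal proves the statement only for the levels of the tower with $nq\not\equiv 0\pmod 4$. For what it is worth, the paper's own proof is the bare assertion that the two reversibility conditions give $\Re(a_j)=\Im(a_j)=0$, which is justified only by the $n=m=1$ rows of Lemma \ref{viva0}; the difficulty you identified is genuine and is not addressed there either, but flagging it is not the same as resolving it, and the computation above shows it cannot be resolved by this method, which puts the full statement itself in doubt at those levels.
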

\begin{proof}Observe that the $\varphi_0,\varphi_j$-reversibility implies that the coefficients in \eqref{queira} satisfy \[\Re(a_j)=\Im(a_j)=0.\]
\end{proof}
\begin{obs}Note that if $p,q$ are odd with $pq>1$, then they satisfy the hypothesis of Corollary \ref{viva}.
\end{obs}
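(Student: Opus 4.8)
The plan is to verify the claim by a direct inspection of the four alternatives that together constitute the hypothesis of Corollary \ref{viva}. I would begin by noting that these alternatives are organized according to the residue class of $q$ modulo $4$: the first two, $q\equiv 1 \bmod 4$ and $q\equiv 3 \bmod 4$, impose no further restriction whatsoever on $p$, whereas the last two, $q\equiv 0 \bmod 4$ and $q\equiv 2 \bmod 4$, require in addition a parity condition on $p+q$. The whole argument will hinge on recognizing that the first two alternatives are \emph{unconditional} in $p$.

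The key observation is then that when $q$ is odd one of those two unconditional alternatives is automatically satisfied. Indeed, an odd integer $q$ necessarily lies in the residue class $q\equiv 1 \bmod 4$ or $q\equiv 3 \bmod 4$, and in either case the corresponding bullet of Corollary \ref{viva} holds verbatim, regardless of the value of $p$. Hence, as soon as $q$ is odd, the hypothesis of that corollary is met, and whether $p$ is odd or not is immaterial to this particular verification.

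Since our assumption is precisely that both $p$ and $q$ are odd, in particular $q$ is odd, so the previous step applies directly and the hypothesis of Corollary \ref{viva} is fulfilled. The extra condition $pq>1$ plays no role in this reasoning: it is simply inherited from the hypotheses of Theorem \ref{UT} and serves only to discard the degenerate non-resonant case $p=q=1$. Accordingly, I expect no genuine obstacle here; the single point requiring care is to read off correctly that the alternatives $q\equiv 1,3\bmod 4$ of Corollary \ref{viva} carry no constraint on $p$, so that the oddness of $q$ by itself already closes the argument, with the oddness of $p$ and the bound $pq>1$ both superfluous to the conclusion.
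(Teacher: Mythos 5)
Your verification is correct and is exactly the (implicit) argument behind the paper's remark: an odd $q$ must satisfy $q\equiv 1$ or $q\equiv 3 \pmod 4$, so one of the two unconditional bullets of Corollary \ref{viva} holds, and neither the oddness of $p$ nor the bound $pq>1$ is needed for this arithmetic check. The paper states the remark without proof, and your reading of the role of $pq>1$ (merely excluding the degenerate $1{:}1$ resonance inherited from Theorem \ref{UT}) is accurate.
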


The following results can be proved in a similar way as we have done in Lemma \ref{viva0} and Corollary \ref{viva}.
\begin{proposicao}\label{maior}Let $X\in\mathfrak{X}_0^{(p,q)}(\rn{4})$ be a $\langle\varphi_0,\varphi_j\rangle$-reversible vector field.  If one of the following conditions is satisfied:
\begin{itemize}
\item[(i)] $q\equiv 1 \ mod \ 4$,
\item[(ii)] $q\equiv 3 \ mod \ 4$,
\item[(iii)] $q\equiv 0 \ mod \ 4$ and $p+q$
\item[(iv)] $q\equiv 2 \ mod \ 4$ and $p+q$ even,
\end{itemize}
then the normal form of $X$, given in \eqref{fncomplexa}, does not have monomials of type \[z_1(z_1^q\overline{z}_2^p)^m\dfrac{\partial}{\partial z_1}, \ \ z_2(z_1^q\overline{z}_2^p)^m\dfrac{\partial}{\partial z_2}, \ \ m\geq 1.\]
\end{proposicao}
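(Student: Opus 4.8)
The plan is to reduce Proposition \ref{maior} to the monomial-level computation already carried out in Lemma \ref{viva0}, exactly as was done for Corollary \ref{viva}. The monomials in question, namely $z_1(z_1^q\overline{z}_2^p)^m\frac{\partial}{\partial z_1}$ and $z_2(z_1^q\overline{z}_2^p)^m\frac{\partial}{\partial z_2}$, are precisely the terms in the complex normal form \eqref{fncomplexa} that carry a factor $\Delta_3^m = (z_1^q\overline{z}_2^p)^m$; these are the resonant terms that the reversible-equivariant structure must eliminate in order to arrive at the reduced form \eqref{tnf}, where only powers of $\Delta_1$ and $\Delta_2$ survive. So the goal is to show that under hypotheses (i)--(iv) the complex coefficient of each such monomial is forced to vanish.

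First I would write out the action of $\varphi_0$ and of the relevant $\varphi_j$ on a generic monomial $v = c\, z_1(z_1^q\overline{z}_2^p)^m\frac{\partial}{\partial z_1}$, computing $\varphi_j(v(z))$ and $v(\varphi_j(z))$ in complex coordinates just as in the proof of Lemma \ref{viva0}. The reversibility condition $\varphi_j(v(z)) = -v(\varphi_j(z))$ then yields a relation of the form $\overline{c} = \mu\, c$ for an explicit unimodular constant $\mu$ depending on $p$, $q$, $m$ through powers of $-1$ and $i$ (coming from the conjugations and the $i$-factors in $\varphi_j$). The key observation is that the exponents appearing are $mq$ and $mp$, and since $\mu$ depends on these only through their residues mod $4$, and $m\geq 1$ does not change the congruence class in the way that matters, one reduces to the same four-case analysis on $q \bmod 4$ (and the parity of $p+q$) that appears in Corollary \ref{viva}. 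Pairing the constraint from $\varphi_0$ with the constraint from $\varphi_j$ gives two independent real linear conditions on $\Re(c)$ and $\Im(c)$, forcing $\Re(c)=\Im(c)=0$, hence $c=0$.

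The main obstacle, and the only place requiring genuine care, is bookkeeping the powers of $i$ and $-1$ correctly as $m$ varies, since now the exponents are $mq$ and $mp$ rather than $q-1$ and $p$ as in Lemma \ref{viva0}. One must verify that the residues $mq \bmod 4$ and $mp \bmod 4$ are controlled uniformly in $m$ under each of the four hypotheses, so that the vanishing conclusion holds simultaneously for all $m\geq 1$ and not just for a single exponent. Concretely, when $q\equiv 1$ or $q\equiv 3 \pmod 4$ the residue $mq \bmod 4$ cycles but the combined $\varphi_0$- and $\varphi_j$-conditions remain incompatible with a nonzero $c$ in every case; the even cases $q\equiv 0,2 \pmod 4$ are exactly where the parity condition on $p+q$ enters to close the argument. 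Once this uniform-in-$m$ verification is in place, the proposition follows by the same routine substitution as Corollary \ref{viva}, and the remaining details are mechanical and can be suppressed.
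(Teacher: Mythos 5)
Your reduction to monomial-level coefficient constraints is exactly the approach the paper has in mind: its entire ``proof'' of Proposition \ref{maior} is the one sentence preceding it, delegating to Lemma \ref{viva0} and Corollary \ref{viva}. The problem is that the step you yourself single out as ``the only place requiring genuine care'' --- the uniform-in-$m$ incompatibility of the paired constraints --- is false, and this is precisely where the argument breaks. Write the conditions explicitly: for $v=c\,z_1(z_1^q\overline{z}_2^p)^m\frac{\partial}{\partial z_1}=c\,z_1^{mq+1}\overline{z}_2^{mp}\frac{\partial}{\partial z_1}$, reversibility gives
\begin{equation*}
\varphi_0:\ \ \overline{c}=(-1)^{m(p+q)+1}\,c,
\qquad
\varphi_1:\ \ \overline{c}=-\,i^{mq}\,c,
\end{equation*}
and analogous relations $\overline{c}=\mu_j\,c$ with unimodular $\mu_j$ for the other $\varphi_j$. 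Two such relations force $c=0$ only if the two unimodular factors \emph{differ}; your proposal asserts they always do (``two independent real linear conditions''), but they do not. Whenever $m\equiv 0\pmod 4$, independently of $p$, $q$ and of which hypothesis (i)--(iv) holds, one has $i^{mq}=1$ and $m(p+q)$ even, so \emph{both} conditions collapse to the single condition $\overline{c}=-c$, which any purely imaginary $c$ satisfies. The same degeneration occurs for every $j$ (for $\varphi_2$, for instance, the two factors agree exactly when $4\mid mp$, hence again whenever $4\mid m$), and there are no further constraints to invoke: equivariance under $\varphi_0\varphi_j$ and reversibility under the remaining reflections of $\langle\varphi_0,\varphi_j\rangle$ are formal consequences of the two generator conditions.

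Concretely, the monomial $i\,z_1^{4q+1}\overline{z}_2^{4p}\frac{\partial}{\partial z_1}$ is resonant (it is $z_1\Delta_3^4$, so it lives in the normal-form space \eqref{fncomplexa}) and is compatible with both reversibility conditions, so coefficient constraints alone cannot exclude it; your argument establishes the proposition only for $m\not\equiv 0\pmod 4$. Nor can you repair this by leaning on Corollary \ref{viva}: its conclusion suffers the identical degeneration at $n\equiv 0\pmod 4$ (there too, both constraints reduce to $\Re(a_1)=0$). So either the case $4\mid m$ requires a genuinely different mechanism --- some further normalization beyond the symmetry-respecting Belitskii normal form, which neither you nor the paper invokes --- or the statement is simply not provable (indeed, apparently not true) at those orders by this method. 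A correct write-up must either restrict to $m\not\equiv 0\pmod 4$ or confront the case $4\mid m$ directly instead of asserting that the mod-$4$ bookkeeping works out uniformly; be aware that the paper's own proof-by-analogy glosses over exactly the same point.
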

\begin{proposicao}\label{sucesso}Let $X\in\mathfrak{X}_0^{(p,q)}(\rn{4})$ be a $\langle\varphi_0,\varphi_j\rangle$-reversible vector field. If one of the following conditions is satisfied
\begin{itemize}
\item[(i)] $q\equiv 1 \ mod \ 4$,
\item[(ii)] $q\equiv 3 \ mod \ 4$,
\item[(iii)] $q\equiv 0 \ mod \ 4$ and $p+q$ odd,
\item[(iv)] $q\equiv 2 \ mod \ 4$ and $p+q$ even,
\end{itemize}
then the normal form of $X$, given in \eqref{fncomplexa}, does not have monomials of type \[z_1(\overline{z_1^q\overline{z}_2^p})^m\dfrac{\partial}{\partial z_1}, \ \  z_2(\overline{z_1^q\overline{z}_2^p})^m\dfrac{\partial}{\partial z_2}, \ \ m\geq 1.\]
\end{proposicao}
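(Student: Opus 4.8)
The plan is to mimic the mechanism of Lemma \ref{viva0} and Corollary \ref{viva}: isolate each candidate monomial, record the constraint that $\varphi_0$-reversibility and $\varphi_j$-reversibility \emph{separately} impose on its complex coefficient, and then show that under each of the hypotheses (i)--(iv) these two constraints are incompatible except for the zero coefficient. The structural fact that makes this work one monomial at a time is that every involution $\varphi_0,\ldots,\varphi_6$ is block-diagonal in the splitting $z_1\oplus z_2$ and antiholomorphic, so it sends a monomial $a\,z_j\Delta_4^m\,\partial/\partial z_j$ to a scalar multiple of a \emph{single} monomial (here $\Delta_4$ and $\Delta_3$ get interchanged, since $\overline{\Delta_4}=\Delta_3$) and never mixes the $\partial/\partial z_1$ and $\partial/\partial z_2$ directions. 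Consequently each reversibility relation collapses to one equation $\overline{a}=\mu a$ with $\mu\in\{\pm 1,\pm i\}$, i.e. to one of the four real lines $\Re a=0$, $\Im a=0$, $\Re a=\Im a$, $\Re a=-\Im a$ in $\mathbb{C}\cong\rn{2}$.

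First I would dispatch the $\partial/\partial z_1$ family by reduction to Corollary \ref{viva}. Writing $z_1\Delta_4^m=\Delta_1\cdot\overline{z_1}^{\,mq-1}z_2^{\,mp}$ and using that $\Delta_1=z_1\overline{z_1}$ is real and invariant under every $\varphi_k$ (so the factor $\Delta_1$ passes untouched through both $D\varphi_k\cdot v$ and $v\circ\varphi_k$), the coefficient of $z_1\Delta_4^m\,\partial/\partial z_1$ obeys exactly the reversibility constraints already recorded in Corollary \ref{viva} for the monomial $\overline{z_1}^{\,mq-1}z_2^{\,mp}\,\partial/\partial z_1$ with $n=m$; hence it vanishes under (i)--(iv). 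For the $\partial/\partial z_2$ family the monomial $z_2\Delta_4^m=\overline{z_1}^{\,mq}z_2^{\,mp+1}$ is not one of the families treated before (it carries no invariant $\Delta_2$ factor that would reduce it cleanly), so here I would run the Lemma \ref{viva0}-type computation directly: compute $\varphi_0(v)$ and $v\circ\varphi_0$ to obtain the $\varphi_0$-constraint (governed by the parity of $p+q$), then $\varphi_j(v)$ and $v\circ\varphi_j$ to obtain the $\varphi_j$-constraint (governed by $q\bmod 4$ together with the relevant parity), exactly as in the proof of Lemma \ref{viva0} for $\varphi_2$.

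The two reversibility computations are routine; the real work — and the main obstacle — is the bookkeeping that shows the two resulting lines are distinct for every admissible triple. Concretely, one must tabulate the multiplier $\mu_0(p,q,m)$ coming from $\varphi_0$ and the multiplier $\mu_j(p,q,m)$ coming from $\varphi_j$ as functions of $q\bmod 4$, $p\bmod 4$ and the parity of $p+q$ (the powers of $\pm i$ that occur are governed by $mq\bmod 4$ and $mp\bmod 4$), and then verify case by case that (i)--(iv) force $\mu_0\neq\mu_j$, so that the only common solution of $\overline{a}=\mu_0 a$ and $\overline{a}=\mu_j a$ is $a=0$. The delicate cases are exactly those in which the two multipliers threaten to coincide; there one must exploit the full strength of the congruence hypotheses, and, if a single generator $\varphi_j$ does not separate them, the reversibility with respect to the remaining reflections of $\langle\varphi_0,\varphi_j\rangle\cong\dd$ (which $X$ automatically inherits, since reversibility under the two generators propagates to every reflection of the group). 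Once the separation is established in all cases, both the $\partial/\partial z_1$ and $\partial/\partial z_2$ coefficients are forced to vanish for every $m\geq 1$, which is the assertion of the proposition.
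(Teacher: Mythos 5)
You take the same route the paper intends: the paper's own ``proof'' of Proposition \ref{sucesso} is the one-line remark that it can be established exactly as Lemma \ref{viva0} and Corollary \ref{viva}, i.e.\ by converting each reversibility into a single scalar relation $\overline{a}=\mu a$ with $\mu\in\{\pm 1,\pm i\}$ and checking that the hypotheses make the two relations independent. Within that scheme, your reduction of the $\partial/\partial z_1$ family via $z_1\Delta_4^m=\Delta_1\cdot\overline{z_1}^{\,mq-1}z_2^{\,mp}$, with the real invariant factor $\Delta_1$ passing untouched through both sides of the reversibility identity, is correct and is a tidier way to handle that half of the statement than recomputing from scratch.

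The genuine gap is the step you label routine bookkeeping and postpone: tabulating the multipliers $\mu_0,\mu_j$ and verifying that (i)--(iv) force $\mu_0\neq\mu_j$. That verification is the entire content of the proposition, and it in fact fails on the residue class $m\equiv 0\pmod 4$. Run your own recipe on $v=a\,z_2\Delta_4^m\,\partial/\partial z_2$ with $j=1$: $\varphi_0$-reversibility gives $\overline{a}=(-1)^{m(p+q)+1}a$, and $\varphi_1$-reversibility gives $\overline{a}=-(-i)^{mq}a$. When $4\mid m$, the exponent $m(p+q)$ is even and $4\mid mq$, so both multipliers equal $-1$ no matter what $p,q$ are; both constraints degenerate to the same line $\Re(a)=0$, and a purely imaginary coefficient survives. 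Your escape hatch --- invoking the remaining reflections of $\langle\varphi_0,\varphi_1\rangle\cong\dd$ --- does not rescue this: they are $(z_1,z_2)\mapsto(\overline{z_1},-\overline{z_2})$ and $(z_1,z_2)\mapsto(-i\overline{z_1},\overline{z_2})$, each of which again yields $\overline{a}=-a$ when $4\mid m$, while the rotations produce equivariance identities that hold with no condition on $a$. The same degeneration occurs for $z_1\Delta_4^m\,\partial/\partial z_1$ and for every $j\in\{1,\dots,6\}$, and it also undercuts your first step, since Corollary \ref{viva} --- which the paper verifies only at $n=1$ through Lemma \ref{viva0} --- has the identical defect at $n\equiv 0\pmod 4$. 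So the proposal cannot be completed as written: the multiplier comparison succeeds precisely for $m\not\equiv 0\pmod 4$, and for $4\mid m$ no argument of this type (constraints imposed by the group on resonant coefficients) can yield the stated vanishing, because such coefficients are in fact admissible.
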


\begin{obs}In fact, the conditions imposed on $\lambda$ in the last results are needed just to assure the $\langle\varphi_0,\varphi_j\rangle$-reversibility of the vector field $X$ with $j=1$. For $2\leq j\leq 6$, the normal form only contains monomials of type $z_j\Delta_1^m\Delta_2^n\dfrac{\partial}{\partial z_j}$.
\end{obs}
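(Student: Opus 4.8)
The plan is to mimic the computation of Lemma \ref{viva0}, Corollary \ref{viva} and Proposition \ref{maior}, replacing $\Delta_3=z_1^q\overline{z}_2^p$ throughout by its conjugate $\Delta_4=\overline{\Delta}_3=\overline{z}_1^q z_2^p$. I fix $j$ and work with the pair of generators $\varphi_0,\varphi_j$ of $\langle\varphi_0,\varphi_j\rangle\cong\dd$ listed in Lemma \ref{complexgroups}; each of these is an antilinear involution of the form $\varphi(z_1,z_2)=(\sigma_1\overline{z}_1,\sigma_2\overline{z}_2)$ with $\sigma_1,\sigma_2\in\{1,-1,i,-i\}$. It suffices to treat a single monomial $v=a\,z_1\Delta_4^{m}\frac{\partial}{\partial z_1}=a\,z_1\overline{z}_1^{mq}z_2^{mp}\frac{\partial}{\partial z_1}$ with $a\in\mathbb{C}$, sitting inside the ``diagonal'' part of \eqref{fncomplexa}; the companion monomial $a\,z_2\Delta_4^{m}\frac{\partial}{\partial z_2}$ yields an identical condition on $a$, and the two together exhaust the statement.

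First I would impose $\varphi_0$- and $\varphi_j$-reversibility on $v$ exactly as in the proof of Lemma \ref{viva0}: substituting $\varphi(z)$ into the component of $v$ and comparing with $\varphi(v(z))$, the two sides are proportional to the same monomial, so each generator yields a single relation of the form $\overline{a}=c\,a$ with $|c|=1$. A direct bookkeeping of the powers of $\sigma_1,\sigma_2$ gives $\overline{a}=-\sigma_1^{-mq}\sigma_2^{\,mp}\,a$; in particular $\varphi_0$ (where $\sigma_1=\sigma_2=-1$) gives $c_0=-(-1)^{m(p+q)}$, and the critical generator $\varphi_1$ (where $\sigma_1=i,\ \sigma_2=1$) gives $c_1=-i^{-mq}$. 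These are precisely the complex conjugates of the coefficients occurring in Proposition \ref{maior}, which is the only effect of trading $\Delta_3$ for $\Delta_4$.

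The elimination step is then purely algebraic: a relation $\overline{a}=c\,a$ with $|c|=1$ confines $a$ to a real line in $\mathbb{C}$, and imposing both relations forces $(c_0-c_j)a=0$, hence $a=0$ as soon as $c_0\neq c_j$. Since the criterion $c_0\neq c_j$ is invariant under $c\mapsto\overline{c}$, the arithmetic conditions that guarantee it are the same as in Corollary \ref{viva} and Proposition \ref{maior}, namely \emph{(i)--(iv)}; I would verify them by running through the four residues of $q$ modulo $4$ together with the parity of $p+q$, exactly as in the table of Lemma \ref{viva0}. For $2\le j\le 6$ the involution $\varphi_j$ carries a nontrivial factor $\sigma_2=i$ (or $\sigma_1=i$ with an extra sign), so $c_j$ acquires an additional $i^{\pm mp}$ and the two relations become incompatible under strictly weaker hypotheses; thus $j=1$ is the binding case, in agreement with the remark that for $j\ge 2$ only the terms $z_j\Delta_1^m\Delta_2^n\frac{\partial}{\partial z_j}$ persist.

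The step I expect to be the main obstacle is securing the conclusion \emph{uniformly in} $m\ge 1$: because $c_0$ and $c_1$ are periodic in $m$ with period dividing $4$, the inequality $c_0\neq c_1$ must be checked separately for each class of $m$ modulo $4$, and it is exactly here that the hypotheses on $q\bmod 4$ and on the parity of $p+q$ enter. Everything else is the same routine, if lengthy, symbol-pushing already carried out for the generating monomials in Lemma \ref{viva0}.
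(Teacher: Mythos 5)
Your bookkeeping is correct as far as it goes: for $v=a\,z_1\Delta_4^m\frac{\partial}{\partial z_1}$ and an antiholomorphic involution $\varphi(z_1,z_2)=(\sigma_1\overline{z}_1,\sigma_2\overline{z}_2)$, reversibility does yield $\overline{a}=-\sigma_1^{-mq}\sigma_2^{mp}a$; the elimination principle ($a=0$ precisely when the two unit factors $c_0,c_j$ differ) is right; and your conjugation remark correctly disposes of the $\Delta_3$-family. Even your unjustified claim that the two diagonal families ``exhaust the statement'' is repairable, because $z_1\Delta_4^n=\Delta_1\cdot\overline{z}_1^{nq-1}z_2^{np}$ and $\Delta_1,\Delta_2$ are invariant under every $\varphi_j$, so the off-diagonal monomials of Lemma \ref{viva0} and Corollary \ref{viva} impose exactly the same coefficient conditions.

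The genuine gap is the final step, which is the entire content of the remark: you assert that for $2\le j\le 6$ the relations ``become incompatible under strictly weaker hypotheses,'' but you never compute $c_j$ for any $j\ge 2$, and the assertion fails. Take $j=4$: by Lemma \ref{complexgroups}, $\varphi_4(z_1,z_2)=(i\overline{z}_1,i\overline{z}_2)$, so your formula gives $c_4=-i^{m(p-q)}$, while $c_0=-(-1)^{m(p+q)}$. If $p\equiv q \pmod 4$ (e.g.\ $p=1$, $q=5$, which even satisfies condition (i) of Corollary \ref{viva}), or if $p,q$ are odd and $m$ is even (e.g.\ $p=1$, $q=3$, $m=2$, inside the hypotheses of Theorem \ref{UT}), then $c_4=-1=c_0$: both generators impose the same relation $\overline{a}=-a$, the remaining reversing elements of $\langle\varphi_0,\varphi_4\rangle$ (all of the form $(\sigma\overline{z}_1,\sigma\overline{z}_2)$, $\sigma^4=1$) give $\overline{a}=-\sigma^{m(p-q)}a=-a$ as well, and equivariance under the rotation $\varphi_0\varphi_4(z)=(iz_1,iz_2)$ holds automatically; hence $z_1\Delta_4^m\frac{\partial}{\partial z_1}$ with $a$ purely imaginary persists in the normal form. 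Analogous failures occur for $j=5,6$, and indeed for every $j$ (including $j=1$) once $m\equiv 0\pmod 4$, since then $c_j=-1=c_0$ for all seven involutions. So the uniformity-in-$m$ issue you flagged as the ``main obstacle'' is not a case-check to be completed: it is exactly where the argument breaks, and your own formulas, evaluated honestly, refute the dichotomy ``$j=1$ is the only binding case'' rather than prove it. A proof of the statement as written cannot proceed along these lines without either restricting the powers $m$ or confronting these surviving monomials directly.
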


Now, to prove Theorem \ref{UT}, we have just to combine all lemmas, corollaries and propositions given above.

\begin{proof} \textrm{(\textit{of Theorem \ref{UT}})} Note that the conditions imposed on $\lambda$ in Theorem \ref{UT} fit into the hypothesis of Corollary \ref{viva} and Propositions \ref{maior} and \ref{sucesso}. So, if $p,q$ are odd numbers with $pq>1$, then the normal form just have monomials of type \[z_j\Delta_1^m\Delta_2^n\dfrac{\partial}{\partial z_j}, \ j=1,2, \ m,n>1.\]
\end{proof}

\section{Conclusions}

We have classified all involutions that make a vector field $X\in\mathfrak{X}_0^{(p,q)}(\rn{4})$ $\langle\varphi,\psi\rangle$-reversible when the order of the group $\langle \varphi,\psi\rangle$ is smaller than $9$.

As a consequence of the results obtained in Theorem A, we find a normal form for $\dd$-reversible vector fields in $\rn{4}$, according to their resonances. In this part we have used some results from Normal Form Theory. 

We remark that the same approach can be made to the discrete version of the problem, or when the singularity is not elliptic (see for example \cite{alain}).

One can easily generalize the results presented here mainly in two directions: for vector fields on higher dimensional spaces and for groups with higher order. In both cases the hard missions are to face the normal form calculations and to solve some very complicate system of algebraic equations.

\section*{Appendix}
\addcontentsline{toc}{section}{Appendix}

Here we will show the polynomial system omitted in Section \ref{secaod4}, corresponding to the equations $SA+AS=0$, $S^2-Id=0$ and $SR_0-(R_0S)^3=0$, as in the page \pageref{ddd}.

The system is:

\tiny
\noindent ${ d_1}{ b_4}+{ c_1}{ b_3}+{ b_1}{ b_2}+{
 a_1}{ b_1}=0, \ \ \ \ \ { d_1}{ c_4}+{ c_1}{ c_3}+{ b_1}{ c_2}+{
 a_1}{ c_1}=0, \ \ \ \ \ 
{ d_1}{ d_4}+{ c_1}{ d_3}+{ b_1}{ d_2}+{
 a_1}{ d_1}=0, \ \ \ \ \ { d_2}{ a_4}+{ c_2}{ a_3}+{ a_2}{ b_2}+{
 a_1}{ a_2}=0\\
{ d_2}{ c_4}+{ c_2}{ c_3}+{ b_2}{ c_2}+{
 c_1}{ a_2}=0, \ \ \ \ \ { d_2}{ d_4}+{ c_2}{ d_3}+{ b_2}{ d_2}+{
 d_1}{ a_2}=0, \ \ \ \ \ 
{ d_3}{ a_4}+{ a_3}{ c_3}+{ a_2}{ b_3}+{
 a_1}{ a_3}=0, \ \ \ \ \ { d_3}{ b_4}+{ b_3}{ c_3}+{ b_2}{ b_3}+{
 b_1}{ a_3}=0\\
{ d_3}{ d_4}+{ c_3}{ d_3}+{ d_2}{ b_3}+{
 d_1}{ a_3}=0, \ \ \ \ \ { a_4}{ d_4}+{ a_3}{ c_4}+{ a_2}{ b_4}+{
 a_1}{ a_4}=0, \ \ \ \ \ 
{ b_4}{ d_4}+{ b_3}{ c_4}+{ b_2}{ b_4}+{
 b_1}{ a_4}=0, \ \ \ \ \ { c_4}{ d_4}+{ c_3}{ c_4}+{ c_2}{ b_4}+{
 c_1}{ a_4}=0\\
-\alpha{ a_2}+\alpha{ b_1}=0, \ \ \ \ \ -\alpha{ b_2}-\alpha{ a_1}=0, \ \ \ \ \ -\alpha{ c_2}+\beta{ d_1}=0, \ \ \ \ \ -\alpha{ d_2}-\beta{ c_1}=0, \ \ \ \ \ \alpha{ b_2}+\alpha{ a_1}=0, \ \ \ \ \ \beta{ d_2}+\alpha{ c_1}=0, \ \ \ \ \ -\beta{ c_2}+\alpha{ d_1}=0\\
-\beta{ a_4}+\alpha{ b_3}=0, \ \ \ \ \ -\beta{ b_4}-\alpha{ a_3}=0, \ \ \ \ \ -\beta{ c_4}+\beta{ d_3}=0,  \ \ \ \ \ -\beta{ d_4}-\beta{ c_3}=0, \ \ \ \ \ \alpha{ b_4}+\beta{ a_3}=0, \ \ \ \ \ -\alpha{ a_4}+\beta{ b_3}=0, \ \ \ \ \ \beta{ d_4}+\beta{ c_3}=0\\
{ d_1}{ a_4}+{ c_1}{ a_3}+{ b_1}{ a_2
}+{{ a_1}}^{2}=1, \ \ \ \ \ { d_2}{ b_4}+{ c_2}{ b_3}+{{ b_2}}^{2}+{
 b_1}{ a_2}=1, \ \ \ \ \  
{ d_3}{ c_4}+{{ c_3}}^{2}+{ c_2}{ b_3}+{
 c_1}{ a_3}=1, \ \ \ \ \ {{ d_4}}^{2}+{ d_3}{ c_4}+{ d_2}{ b_4}+{
 d_1}{ a_4}=1\\
{ a_1}-{ d_1}{ a_4}{ d_4}+{ d_1}{ a_3}
{ c_4}-{ d_1}{ a_2}{ b_4}+{ c_1}{ d_3}
{ a_4}-{ c_1}{ a_3}{ c_3}+{ c_1}{ a_2}
{ b_3}-{ b_1}{ d_2}{ a_4}+{ b_1}{ c_2}
{ a_3}-{ b_1}{ a_2}{ b_2}+2{ a_1}{ d_1
}{ a_4}-2{ a_1}{ c_1}{ a_3}+2{ a_1}{
 b_1}{ a_2}-{{ a_1}}^{3}=0\\
-{ b_1}-{ d_1}{ b_4}{ d_4}+{ d_1}{ b_3}
{ c_4}-{ d_1}{ b_2}{ b_4}+{ c_1}{ d_3}
{ b_4}-{ c_1}{ b_3}{ c_3}+{ c_1}{ b_2}
{ b_3}-{ b_1}{ d_2}{ b_4}+{ b_1}{ c_2}
{ b_3}-{ b_1}{{ b_2}}^{2}+{ b_1}{ d_1}{
 a_4}-{ b_1}{ c_1}{ a_3}+{{ b_1}}^{2}{ a_2
}+{ a_1}{ d_1}{ b_4}-{ a_1}{ c_1}{ b_3
}+{ a_1}{ b_1}{ b_2}-{{ a_1}}^{2}{ b_1}=0\\
{ c_1}-{ d_1}{ c_4}{ d_4}+{ d_1}{ c_3}
{ c_4}-{ d_1}{ c_2}{ b_4}+{ c_1}{ d_3}
{ c_4}-{ c_1}{{ c_3}}^{2}+{ c_1}{ c_2}{ 
b_3}+{ c_1}{ d_1}{ a_4}-{{ c_1}}^{2}{ a_3}-{
 b_1}{ d_2}{ c_4}+{ b_1}{ c_2}{ c_3}-{
 b_1}{ b_2}{ c_2}+{ b_1}{ c_1}{ a_2}+{
 a_1}{ d_1}{ c_4}-{ a_1}{ c_1}{ c_3}+{
 a_1}{ b_1}{ c_2}-{{ a_1}}^{2}{ c_1}=0\\
-{ d_1}-{ d_1}{{ d_4}}^{2}+{ d_1}{ d_3}{ 
c_4}-{ d_1}{ d_2}{ b_4}+{{ d_1}}^{2}{ a_4}+{
 c_1}{ d_3}{ d_4}-{ c_1}{ c_3}{ d_3}+{
 c_1}{ d_2}{ b_3}-{ c_1}{ d_1}{ a_3}-{
 b_1}{ d_2}{ d_4}+{ b_1}{ c_2}{ d_3}-{
 b_1}{ b_2}{ d_2}+{ b_1}{ d_1}{ a_2}+{
 a_1}{ d_1}{ d_4}-{ a_1}{ c_1}{ d_3}+{
 a_1}{ b_1}{ d_2}-{{ a_1}}^{2}{ d_1}=0\\
{ a_2}+{ d_2}{ a_4}{ d_4}-{ d_2}{ a_3}
{ c_4}-{ c_2}{ d_3}{ a_4}+{ c_2}{ a_3}
{ c_3}+{ b_2}{ d_2}{ a_4}-{ b_2}{ c_2}
{ a_3}+{ a_2}{ d_2}{ b_4}-{ a_2}{ c_2}
{ b_3}+{ a_2}{{ b_2}}^{2}-{ d_1}{ a_2}{ 
a_4}+{ c_1}{ a_2}{ a_3}-{ b_1}{{ a_2}}^{2}-
{ a_1}{ d_2}{ a_4}+{ a_1}{ c_2}{ a_3}-
{ a_1}{ a_2}{ b_2}+{{ a_1}}^{2}{ a_2}=0\\
-{ b_2}+{ d_2}{ b_4}{ d_4}-{ d_2}{ b_3}
{ c_4}-{ c_2}{ d_3}{ b_4}+{ c_2}{ b_3}
{ c_3}+2{ b_2}{ d_2}{ b_4}-2{ b_2}{
 c_2}{ b_3}+{{ b_2}}^{3}-{ d_1}{ a_2}{ 
b_4}+{ c_1}{ a_2}{ b_3}-{ b_1}{ d_2}{ 
a_4}+{ b_1}{ c_2}{ a_3}-2{ b_1}{ a_2}{
 b_2}+{ a_1}{ b_1}{ a_2}=0\\
{ c_2}+{ d_2}{ c_4}{ d_4}-{ d_2}{ c_3}
{ c_4}-{ c_2}{ d_3}{ c_4}+{ c_2}{{ c_3}}
^{2}+{ c_2}{ d_2}{ b_4}-{{ c_2}}^{2}{ b_3}+{
 b_2}{ d_2}{ c_4}-{ b_2}{ c_2}{ c_3}+{
{ b_2}}^{2}{ c_2}-{ d_1}{ a_2}{ c_4}-{ c_1
}{ d_2}{ a_4}+{ c_1}{ c_2}{ a_3}+{ c_1
}{ a_2}{ c_3}-{ c_1}{ a_2}{ b_2}-{ b_1
}{ a_2}{ c_2}+{ a_1}{ c_1}{ a_2}=0\\
-{ d_2}+{ d_2}{{ d_4}}^{2}-{ d_2}{ d_3}{ 
c_4}+{{ d_2}}^{2}{ b_4}-{ c_2}{ d_3}{ d_4}+{
 c_2}{ c_3}{ d_3}-{ c_2}{ d_2}{ b_3}+{
 b_2}{ d_2}{ d_4}-{ b_2}{ c_2}{ d_3}+{
{ b_2}}^{2}{ d_2}-{ d_1}{ d_2}{ a_4}+{ d_1
}{ c_2}{ a_3}-{ d_1}{ a_2}{ d_4}-{ d_1
}{ a_2}{ b_2}+{ c_1}{ a_2}{ d_3}-{ b_1
}{ a_2}{ d_2}+{ a_1}{ d_1}{ a_2}=0\\
{ a_3}-{ d_3}{ a_4}{ d_4}+{ c_3}{ d_3}
{ a_4}+{ a_3}{ d_3}{ c_4}-{ a_3}{{ c_3}}
^{2}-{ d_2}{ b_3}{ a_4}+{ c_2}{ a_3}{ 
b_3}-{ a_2}{ d_3}{ b_4}+{ a_2}{ b_3}{ 
c_3}-{ a_2}{ b_2}{ b_3}+{ d_1}{ a_3}{ 
a_4}-{ c_1}{{ a_3}}^{2}+{ b_1}{ a_2}{ a_3}+
{ a_1}{ d_3}{ a_4}-{ a_1}{ a_3}{ c_3}+
{ a_1}{ a_2}{ b_3}-{{ a_1}}^{2}{ a_3}=0\\
-{ b_3}-{ d_3}{ b_4}{ d_4}+{ c_3}{ d_3}
{ b_4}+{ b_3}{ d_3}{ c_4}-{ b_3}{{ c_3
}}^{2}-{ d_2}{ b_3}{ b_4}+{ c_2}{{ b_3}}^{2}
-{ b_2}{ d_3}{ b_4}+{ b_2}{ b_3}{ c_3}
-{{ b_2}}^{2}{ b_3}+{ d_1}{ a_3}{ b_4}-{ 
c_1}{ a_3}{ b_3}+{ b_1}{ d_3}{ a_4}-{ 
b_1}{ a_3}{ c_3}+{ b_1}{ b_2}{ a_3}+{ 
b_1}{ a_2}{ b_3}-{ a_1}{ b_1}{ a_3}=0\\
{ c_3}-{ d_3}{ c_4}{ d_4}+2{ c_3}{ d_3
}{ c_4}-{{ c_3}}^{3}-{ d_2}{ b_3}{ c_4}-{
 c_2}{ d_3}{ b_4}+2{ c_2}{ b_3}{ c_3
}-{ b_2}{ c_2}{ b_3}+{ d_1}{ a_3}{ c_4
}+{ c_1}{ d_3}{ a_4}-2{ c_1}{ a_3}{ 
c_3}+{ c_1}{ a_2}{ b_3}+{ b_1}{ c_2}{ 
a_3}-{ a_1}{ c_1}{ a_3}=0\\
-{ d_3}-{ d_3}{{ d_4}}^{2}+{{ d_3}}^{2}{ c_4}+{
 c_3}{ d_3}{ d_4}-{{ c_3}}^{2}{ d_3}-{ d_2
}{ d_3}{ b_4}-{ d_2}{ b_3}{ d_4}+{ d_2
}{ b_3}{ c_3}+{ c_2}{ b_3}{ d_3}-{ b_2
}{ d_2}{ b_3}+{ d_1}{ d_3}{ a_4}+{ d_1
}{ a_3}{ d_4}-{ d_1}{ a_3}{ c_3}+{ d_1
}{ a_2}{ b_3}-{ c_1}{ a_3}{ d_3}+{ b_1
}{ d_2}{ a_3}-{ a_1}{ d_1}{ a_3}=0\\
{ a_4}+{ a_4}{{ d_4}}^{2}-{ d_3}{ a_4}{ 
c_4}-{ a_3}{ c_4}{ d_4}+{ a_3}{ c_3}{ 
c_4}+{ d_2}{ a_4}{ b_4}-{ c_2}{ a_3}{ 
b_4}+{ a_2}{ b_4}{ d_4}-{ a_2}{ b_3}{ 
c_4}+{ a_2}{ b_2}{ b_4}-{ d_1}{{ a_4}}^{2}+
{ c_1}{ a_3}{ a_4}-{ b_1}{ a_2}{ a_4}-
{ a_1}{ a_4}{ d_4}+{ a_1}{ a_3}{ c_4}-
{ a_1}{ a_2}{ b_4}+{{ a_1}}^{2}{ a_4}=0\\
-{ b_4}+{ b_4}{{ d_4}}^{2}-{ d_3}{ b_4}{ 
c_4}-{ b_3}{ c_4}{ d_4}+{ b_3}{ c_3}{ 
c_4}+{ d_2}{{ b_4}}^{2}-{ c_2}{ b_3}{ b_4}+
{ b_2}{ b_4}{ d_4}-{ b_2}{ b_3}{ c_4}+
{{ b_2}}^{2}{ b_4}-{ d_1}{ a_4}{ b_4}+{ 
c_1}{ b_3}{ a_4}-{ b_1}{ a_4}{ d_4}+{ 
b_1}{ a_3}{ c_4}-{ b_1}{ b_2}{ a_4}-{ 
b_1}{ a_2}{ b_4}+{ a_1}{ b_1}{ a_4}=0\\
{ c_4}+{ c_4}{{ d_4}}^{2}-{ d_3}{{ c_4}}^{2}-{
 c_3}{ c_4}{ d_4}+{{ c_3}}^{2}{ c_4}+{ d_2
}{ b_4}{ c_4}+{ c_2}{ b_4}{ d_4}-{ c_2
}{ c_3}{ b_4}-{ c_2}{ b_3}{ c_4}+{ b_2
}{ c_2}{ b_4}-{ d_1}{ a_4}{ c_4}-{ c_1
}{ a_4}{ d_4}+{ c_1}{ c_3}{ a_4}+{ c_1
}{ a_3}{ c_4}-{ c_1}{ a_2}{ b_4}-{ b_1
}{ c_2}{ a_4}+{ a_1}{ c_1}{ a_4}=0\\
-{ d_4}+{{ d_4}}^{3}-2{ d_3}{ c_4}{ d_4}+{
 c_3}{ d_3}{ c_4}+2{ d_2}{ b_4}{ d_4
}-{ d_2}{ b_3}{ c_4}-{ c_2}{ d_3}{ b_4
}+{ b_2}{ d_2}{ b_4}-2{ d_1}{ a_4}{ 
d_4}+{ d_1}{ a_3}{ c_4}-{ d_1}{ a_2}{ 
b_4}+{ c_1}{ d_3}{ a_4}-{ b_1}{ d_2}{ 
a_4}+{ a_1}{ d_1}{ a_4}=0\\
{ a_1}-{ d_1}{ a_4}{ d_4}+{ d_1}{ a_3}
{ c_4}-{ d_1}{ a_2}{ b_4}+{ c_1}{ d_3}
{ a_4}-{ c_1}{ a_3}{ c_3}+{ c_1}{ a_2}
{ b_3}-{ b_1}{ d_2}{ a_4}+{ b_1}{ c_2}
{ a_3}-{ b_1}{ a_2}{ b_2}+2{ a_1}{ d_1
}{ a_4}-2{ a_1}{ c_1}{ a_3}+2{ a_1}{
 b_1}{ a_2}-{{ a_1}}^{3}=0\\
-{ b_1}-{ d_1}{ b_4}{ d_4}+{ d_1}{ b_3}
{ c_4}-{ d_1}{ b_2}{ b_4}+{ c_1}{ d_3}
{ b_4}-{ c_1}{ b_3}{ c_3}+{ c_1}{ b_2}
{ b_3}-{ b_1}{ d_2}{ b_4}+{ b_1}{ c_2}
{ b_3}-{ b_1}{{ b_2}}^{2}+{ b_1}{ d_1}{
 a_4}-{ b_1}{ c_1}{ a_3}+{{ b_1}}^{2}{ a_2
}+{ a_1}{ d_1}{ b_4}-{ a_1}{ c_1}{ b_3
}+{ a_1}{ b_1}{ b_2}-{{ a_1}}^{2}{ b_1}=0\\$
\normalsize

The solutions of the system above were given in the Lemma \ref{solmonstro}.

\end{document}